\newtheorem{Theorem}{\sc Theorem}
\newtheorem{Lemma}[Theorem]{\sc Lemma}
\newtheorem{Remark}[Theorem]{\sc Remark}
\newcommand{\R}{{\if mm {\rm I}\mkern -3mu{\rm R}\else \leavevmode
		\hbox{I}\kern -.17em\hbox{R} \fi}}
\def\sqr#1#2{{
		\vcenter{
			\vbox{\hrule height.#2pt
				\hbox{\vrule width.#2pt height#1pt \kern#1pt
					\vrule width.#2pt
				}
				\hrule height.#2pt
			}
		}
}}
\def\bar{\overline}
\def\real{\mathbb{R}}
\def\lista#1
\begin{document}
\title{Numerical analysis of a family of simultaneous distributed-boundary mixed elliptic optimal control problems and their asymptotic behaviour through a commutative diagram and error estimates}

\author{
Carolina M. Bollo  \footnote{\, Depto. Matem\'atica, FCEFQyN, Universidad Nacional de R\'io Cuarto, Ruta 36 Km 601,
5800 R\'io Cuarto, Argentina. E-mail: cbollo@exa.unrc.edu.ar., cgariboldi@exa.unrc.edu.ar.} \ Claudia M. Gariboldi $^{*}$ 
\ Domingo A. Tarzia \footnote{\, Depto. Matem\'atica, FCE, Universidad Austral, Paraguay 1950, S2000FZF Rosario, Argentina.} \footnote{\, CONICET, Argentina. E-mail: DTarzia@austral.edu.ar.}
}

\date{}

\maketitle
	
\medskip

\noindent {\bf Abstract.} \

In this paper, we consider a family of simultaneous distributed-boundary optimal control problems ($P_{\alpha}$) on the internal energy and the heat flux for a system governed by a mixed elliptic variational equality with a parameter $\alpha >0$ (the
heat transfer coefficient on a portion of the boundary of the domain) and a simultaneous distributed-boundary optimal control problem ($P$) governed also by an elliptic variational equality with a Dirichlet boundary condition on the same portion of the boundary. We formulate discrete approximations $\left(P_{h \alpha}\right)$ and $\left(P_h\right)$ of the optimal control problems $\left(P_\alpha\right)$ and $(P)$ respectively, for each $h>0$ and for each $\alpha>0$, through the finite element method with Lagrange's triangles of type 1 with parameter $h$ (the longest side of the triangles). The goal of this paper is to study the convergence of this family of discrete simultaneous distributed-boundary mixed elliptic optimal control problems $\left(P_{h \alpha}\right)$ when the parameters $\alpha$ goes to infinity and the parameter $h$ goes to zero simultaneously. We prove the convergence of the family of discrete problems $\left(P_{h \alpha}\right)$ to the discrete problem $\left(P_h\right)$ when $\alpha \rightarrow +\infty$, for each $h>0$, in adequate functional spaces. We study the convergence of the discrete problems $\left(P_{h \alpha}\right)$ and $\left(P_h\right)$, for each $\alpha >0$, when $h \rightarrow 0^+$ obtaining a commutative diagram which relates the continuous and discrete simultaneous distributed-boundary mixed elliptic optimal control problems $\left(P_{h \alpha}\right),\left(P_\alpha\right),\left(P_h\right)$ and $(P)$ by taking the limits $h \rightarrow 0^+$ and $\alpha \rightarrow +\infty$ respectively. We also study the double convergence of $\left(P_{h \alpha}\right)$ to $(P)$ when $(h, \alpha) \rightarrow(0^+,+\infty)$ which represents the diagonal convergence in the above commutative diagram.

\medskip
	
\noindent
{\bf Key words.}
Simultaneous optimal control problems, Elliptic variational equalities, Mixed
boundary conditions, Numerical analysis, Finite element method, Error
estimations.
\medskip
	
\noindent
{\bf 2020 Mathematics Subject Classification.} 35J88, 35R35, 49J40, 49J45, 65K15, 65N30.

\medskip

{\thispagestyle{empty}} 

\section{Introduction}

We consider a bounded domain $\Omega$ in $\real^d$ whose
regular boundary $\Gamma $ consists of the union of two disjoint portions $\Gamma_{i}$, $i=1$, $2$,
with $|\Gamma_{i}|>0$, where $|\Gamma_i|$ denotes the $(d-1)$-dimensional
Hausdorff measure of the portion $\Gamma_i$ on $\Gamma$.
The outward normal vector on
the boundary is denoted by $n$.
We formulate the following classical steady-state heat conduction problems
with mixed boundary conditions ~\cite{AK, BBP, G, LCB, Ta3}:
\begin{eqnarray}
&&
-\Delta u=g \ \ \mbox{in} \ \ \Omega,
\ \ \quad u\big|_{\Gamma_{1}}=b,
\ \ \quad-\frac{\partial u}{\partial n}\big|_{\Gamma_{2}}=q,
\label{P}
\end{eqnarray}
\begin{equation}\label{Palfa}
-\Delta u=g \ \ \mbox{in} \ \ \Omega,
\ \quad -\frac{\partial u}{\partial n}\big|_{\Gamma_{1}}=\alpha(u-b),
\ \quad  -\frac{\partial u}{\partial n}\big|_{\Gamma_{2}}=q,  
\end{equation}
where $u$ is the temperature in $\Omega$,
$g$ is the internal energy in $\Omega$,
$b=Const.>0$ is the temperature on $\Gamma_{1}$ for the system (\ref{P}) and the temperature of
the external neighborhood on $\Gamma_{1}$ for the system (\ref{Palfa}) respectively,
$q$ is the heat flux on $\Gamma_{2}$ and $\alpha>0$ is the heat transfer
coefficient on $\Gamma_{1}$,
which satisfy the hypothesis:
$g\in H=L^2(\Omega)$ and 
$q\in Q=L^2(\Gamma_2)$.

Throughout the paper we use the following notation:
\begin{eqnarray*}
&& V=H^{1}(\Omega),
\quad
V_{0}=\{v\in V / v = 0 \ \ \mbox{on} \ \ \Gamma_{1} \},
\\[2mm]
&&
K=\{v\in V /
v = b \ \ \mbox{on} \ \ \Gamma_{1} \}=b+V_0,
\\
&&
a(u,v)=\int_{\Omega }\nabla u \, \nabla v \, dx, \quad L(v)= \int_{\Omega}g v \,dx -
\int_{\Gamma_{2}}q \gamma (v) \,d\Gamma,\\
&&
a_{\alpha}(u,v)=a(u,v)+\alpha\int_{\Gamma_{1}} \gamma (u)  \gamma (v) \, d\Gamma, \quad L_{\alpha}(v)= L(v)+\alpha\int_{\Gamma_{1}}b \gamma (v) \, d\Gamma,
\end{eqnarray*}
where $\gamma \colon V \to L^2(\Gamma)$
denotes the trace operator on $\Gamma$.
In what follows, we write $u$ for the trace of a function $u \in V$ on the boundary.
In a standard way, we obtain the following variational formulations of (\ref{P}) and (\ref{Palfa}), \cite{Ta}:
\begin{eqnarray}
&&
\hspace{-1cm}
\mbox{find} \ \ u\in K \ \ \mbox{such that}\ \
a(u,v)=L(v)
\ \ \mbox{for all} \ \ v\in V_{0},
\label{Pvariacional}
\end{eqnarray}
\begin{eqnarray}
&&
\hspace{-1cm}
\mbox{find} \ \ u_{\alpha}\in V \ \ \mbox{such that}\ \
a_{\alpha}(u_{\alpha},v)=L_{\alpha}(v)
\ \ \mbox{for all} \ \ v\in V.
\label{Palfavariacional}
\end{eqnarray}
The standard norms on $V$ and $V_0$ are denoted by
\begin{eqnarray*}
&&
\| v \|_V = \Big(
\| v \|^2_{L^2(\Omega)}
+ \| \nabla v \|^2_{L^2(\Omega;\real^d)} \Big)^{1/2}
\ \ \mbox{for} \ \ v \in V, \\ [2mm]
&&
\| v \|_{V_0} = \| \nabla v \|_{L^2(\Omega;\real^d)}
\ \ \mbox{for} \ \ v \in V_0.
\end{eqnarray*}
It is well known by the Poincar\'e inequality,
see~\cite{CLM, R}, that on $V_0$ the above two norms
are equivalent. Note that the bilinear, symmetric and continuous forms $a$ and $a_{\alpha}$ are coercive on $V_{0}$ and $V$ respectively, that is, \cite{KS}:
\begin{equation}\label{coercive}
\exists \lambda >0\quad \text{such that}\quad
a(v, v) = \|v\|^{2}_{V_0} \ge \lambda \|v\|^{2}_{V}
\ \ \mbox{for all} \ \ v\in V_{0},
\end{equation}
\begin{equation}\label{coercivealfa}
\exists \lambda_{\alpha}>0\quad \text{such that}\quad
a_{\alpha}(v, v) = \|v\|^{2}_{V_0} \ge \lambda_{\alpha} \|v\|^{2}_{V}
\ \ \mbox{for all} \ \ v\in V
\end{equation}
where $\lambda_{\alpha}=\lambda_{1}\min\{1,\alpha\}$, with $\lambda_{1}>0$ the coerciveness constant for the bilinear form $a_{1}$, \cite{KS,TT}.

We remark that, under additional hypotheses on the data
$g$, $q$ and $b$, problem (\ref{P}) can be considered as steady-state two-phase Stefan problem, see ~\cite{GT,TT,Ta, Ta3}.

We consider the following continuous optimal control problems \cite{GaTa2,Li,Tr}:
\begin{itemize}
\item [($P$)] A simultaneous distributed and Neumann boundary optimal control problem, given by:
\begin{equation}\label{OP}
\text{find}\quad (\overline{g},\overline{q}) \in H \times Q \quad \text{such that} \quad J((\overline{g},\overline{q}))=\min_{(g,q)\in H\times Q }J(g,q)
\end{equation}
with
\begin{equation}\label{JOP}
J(g,q)=\frac{1}{2}||u_{gq}-z_{d}||^{2}_{H}+\frac{M_1}{2}||g||^{2}_{H}+\frac{M_2}{2}||q||^{2}_{Q}
\end{equation}
where $u_{gq}$ is the unique solution to the variational equality (\ref{Pvariacional}) for $g\in H$ and $q\in Q$, $z_{d}\in H$ given and $M_1$ and $M_2$ are positive constants given. 
\item [$(P_{\alpha}$)] For each $\alpha>0$, the simultaneous distributed and Neumann boundary optimal control problem:
\begin{equation}\label{OPalfa}
\text{find}\quad (\overline{g}_{\alpha},\overline{q}_{\alpha})\in H\times Q \quad \text{such that} \quad J_{\alpha}(\overline{g}_{\alpha},\overline{q}_{\alpha})=\min_{(g,q)\in H\times Q}J_{\alpha}(g,q)
\end{equation}
with
\begin{equation}\label{JOPalfa}
J_{\alpha}(g,q)=\frac{1}{2}||u_{\alpha gq}-z_{d}||^{2}_{H}+\frac{M_1}{2}||g||^{2}_{H}+\frac{M_2}{2}||q||^{2}_{Q}
\end{equation}
where $u_{\alpha gq}$ is a solution to the variational equality (\ref{Palfavariacional}) for $g\in H$, $q\in Q$ and $\alpha>0$, $z_{d}\in H$ is given and $M_1$ and $M_2$ are positive constants.
\end{itemize}

In relation with the simultaneous optimal control problems (\ref{OP}) and (\ref{OPalfa}), we define the adjoint states, as the unique solutions of the variational equalities, \cite{GaTa2}:
\begin{eqnarray}
&&
\hspace{-1cm}
\mbox{find} \ \ p_{gq}\in V_{0} \ \ \mbox{such that}\ \
a(p_{gq},v)=(u_{gq}-z_{d},v)_{H}
\ \ \mbox{for all} \ \ v\in V_{0},
\label{Padjvariacional}
\end{eqnarray}
\begin{eqnarray}
&&
\hspace{-1cm}
\mbox{find} \ \ p_{\alpha gq}\in V \ \ \mbox{such that}\ \
a_{\alpha}(p_{\alpha gq},v)=(u_{\alpha gq}-z_{d},v)_{H}
\ \ \mbox{for all} \ \ v\in V.
\label{Palfaadjvariacional}
\end{eqnarray}

The unique continuous simultaneous vectorial optimal controls $(\overline{g},\overline{q})$ and $(\overline{g}_{\alpha},\overline{q}_{\alpha})$ can be characterized, following \cite{GaTa, GaTa2}, as a fixed point on $H\times Q$ for suitable operators $W$ and $W_\alpha$ over their optimal adjoint system states $p_{\overline{g}\,\overline{q}} \in V_0$ and $p_{\alpha\overline{g}_{\alpha}\,\overline{q}_{\alpha}} \in V$, defined by:
$$
\begin{aligned}
&W: H\times Q \rightarrow H\times Q \quad \text {such that} \quad W(g,q)=(-\frac{1}{M_1} p_{gq},\frac{1}{M_2} p_{gq})\\
&W_\alpha: H\times Q \rightarrow H\times Q \quad \text {such that} \quad W_\alpha(g,q)=(-\frac{1}{M_1} p_{\alpha gq},\frac{1}{M_2} p_{\alpha gq}).
\end{aligned}
$$
The limit of the optimal control problems (\ref{OPalfa}) when $\alpha \rightarrow +\infty$ was studied in \cite{GaTa2} and it was proved that:
$$
\lim _{\alpha \rightarrow +\infty}\left\|u_{\alpha \overline{g}_{\alpha}\,\overline{q}_{\alpha}}-u_{\overline{g}\,\overline{q}}\right\|_V=0, \quad \lim _{\alpha \rightarrow +\infty}\left\|p_{\alpha \overline{g}_{\alpha}\,\overline{q}_{\alpha}}-p_{\overline{g}\,\overline{q}}\right\|_V=0, 
$$
$$
\lim _{\alpha \rightarrow +\infty}\left\|(\overline{g}_{\alpha},\overline{q}_{\alpha} )-(\overline{g},\overline{q})\right\|_{H\times Q}=0
$$
where the norm in $H\times Q$ is defined by:
$$
||(g,q)||_{H\times Q}^{2}=||g)||_{H}^{2}+||q||_{Q}^{2},\quad \forall (g.q)\in H\times Q.
$$

Now, we consider the finite element method and a polygonal domain $\Omega \subset \mathbb{R}^n$ with a regular triangulation with Lagrange triangles of type 1, constituted by affine-equivalent finite element of class $C^0$ being $h$ the parameter of the finite element approximation which goes to zero \cite{BS,Ci}. Then, we discretize the elliptic variational equalities for the system states (\ref{Pvariacional}) and (\ref{Palfavariacional}), the adjoint system states (\ref{Padjvariacional}) and (\ref{Palfaadjvariacional}), and the cost functional (\ref{JOP}) and (\ref{JOPalfa}), respectively. In general, the solution of a mixed elliptic boundary problem belongs to $H^r(\Omega)$ with $1<r \leq 3 / 2-\epsilon$ $(\epsilon>0)$, but there exist some examples which solutions belong to $H^r(\Omega)$ with $2 \leq r$ \cite{AK,LCB,S}. 

The goal of this paper is to study the numerical analysis, by using the finite element method, of the convergence results corresponding to the continuous simultaneous distibuted-boundary elliptic optimal control problems (\ref{OP}) and (\ref{OPalfa}) when $\alpha \rightarrow +\infty$. Moreover, the following commutative diagram which relates the continuous simultaneous distributed-boundary mixed optimal control problems $(P_{\alpha})$ and $(P)$, with the discrete simultaneous distributed-boundary mixed optimal control problems $(P_{h\alpha})$ and $(P_h)$ is obtained by taking the limits $h\rightarrow 0^+$, $\alpha \rightarrow +\infty$ and $(h,\alpha)\rightarrow (0^+,+\infty)$ as follows:

\begin{center}
\begin{tikzpicture}
   \draw (0,5) node {\textbf{Problem} \boldsymbol{$(P_{\alpha})$}};
   \draw (0,4.3) node {$(\overline{g}_{\alpha},\overline{q}_{\alpha}),\,u_{\alpha\overline{g}_{\alpha}\overline{q}_{\alpha}},\,p_{\alpha\overline{g}_{\alpha}\overline{q}_{\alpha}}$};
   \draw (9,5) node {\textbf{Problem} \boldsymbol{$(P)$}};
   \draw (9,4.3) node {$(\overline{g},\overline{q}),\,u_{\overline{g}\,\overline{q}},\,p_{\overline{g}\,\overline{q}}$};
   \draw (0,0) node {\textbf{Problem} \boldsymbol{$(P_{h\alpha})$}};
     \draw (0,0.7) node {$(\overline{g}_{h\alpha},\overline{q}_{h\alpha}),\,u_{h\alpha\overline{g}_{h\alpha}\overline{q}_{h\alpha}},\,p_{h\alpha\overline{g}_{h\alpha}\overline{q}_{h\alpha}}$};
   \draw (9,0) node {\textbf{Problem} \boldsymbol{$(P_{h})$}};
    \draw (9,0.7) node {$(\overline{g}_{h},\overline{q}_{h}),\,u_{h\overline{g}_{h}\overline{q}_{h}},\,p_{h\overline{g}_{h}\overline{q}_{h}}$};
   \draw[-latex] (0,1.5) to (0,3.5);
    \draw (1,2.5) node {$h\rightarrow 0^+$};
   \draw[-latex] (9,1.5) to (9,3.5);
    \draw (10,2.5) node {$h\rightarrow 0^+$};
      \draw[-latex] (3.5,0.5) to (6.5,0.5);
      \draw (5,1) node {$\alpha\rightarrow +\infty$};
       \draw[-latex] (3.5,4.5) to (6.5,4.5);
  \draw (5,5) node {$\alpha\rightarrow +\infty$};
    \draw[-latex] (1.7,1.3) to (7.5,3.9);
     \draw (6.6,2.5) node {$(h,\alpha)\rightarrow (0^+, +\infty)$};
\end{tikzpicture}
\end{center}
where $(\overline{g}_h, \overline{q}_h)$, $u_{h\overline{g}_h\overline{q}_h}$ and $p_{h \overline{g}_h\overline{q}_h}$ are the optimal control, system state and adjoint state of the discrete simultaneous distributed-boundary optimal control problem $(P_{h})$ for each $h>0$, and $(\overline{g}_{h\alpha}, \overline{q}_{h\alpha})$, $u_{h\alpha \overline{g}_{h\alpha}\overline{q}_{h\alpha}}$ and $p_{h\alpha \overline{g}_{h\alpha}\overline{q}_{h\alpha}}$ are the optimal control, the system state and adjoint state of the discrete simultaneous distributed-boundary optimal control problem $(P_{h\alpha})$ for each $h>0$ and $\alpha >0$, respectively.

The study of the limit $h\rightarrow 0^{+}$ of the discrete solutions of optimal control problems can be considered as a classical limit, see \cite{CM1, CM2, CR, DGH, DH, HMRS, HH, H1, HM, MH, YCY, YCL} but the double limit $(h,\alpha)\rightarrow (0^{+},+\infty)$ can be considered as a new ones for a vectorial control problem.

The paper is structured as follows. In Section 2, we formulate the discrete elliptic variational equalities for the system states $u_{hgq}$ and $u_{h\alpha gq}$, we define the discrete cost functional $J_h$ and $J_{h\alpha}$, we formulate the discrete simultaneous distributed-boundary optimal control problems $(P_h)$ and $(P_{h\alpha})$, and the discrete elliptic variational equalities for the adjoint states $p_{hgq}$ and $p_{h\alpha gq}$ for each $\alpha >0$ and $h>0$. We obtain properties for the discrete optimal control problems and we define contraction operators $W_h$ and $W_{h\alpha}$ which allows obtain the optimal controls $(\overline{g}_h, \overline{q}_h)$ and $(\overline{g}_{h\alpha}, \overline{q}_{h\alpha})$ as fixed points. In Section 3, we study the convergences of the discrete
optimal control problems $(P_h)$ to $(P)$, and $(P_{h\alpha})$ to $(P_{\alpha})$ when $h\rightarrow 0^{+}$ (for each $\alpha >0$). In Section 4, we study the convergence of the discrete optimal control problems $(P_{h\alpha})$ to $(P_h)$ when $\alpha \rightarrow +\infty$ (for each $h>0$) and we obtain a commutative diagram which relates the continuous and discrete optimal control problems by taking the limits $h\rightarrow 0^{+}$ and $\alpha \rightarrow +\infty$. In Section 5, we study the double convergence of the discrete optimal control problems
$(P_{h\alpha})$ to $(P)$ when $(h,\alpha)\rightarrow (0^{+},+\infty)$ and we obtain the diagonal convergence in the previous commutative diagram. In Section 6, we obtain the relationship and estimations among the optimal values $J(\overline{g},\overline{q})$, $J(\overline{g}_h,\overline{q}_h)$, $J_h(\overline{g}_h,\overline{q}_h)$ and $J_h(\overline{g},\overline{q})$ corresponding to the optimal control problems ($P$) and ($P_h$) and the same estimations corresponding to the optimal control problems ($P_{\alpha}$) and ($P_{h\alpha}$). In Section 7, we formulate the conclusions of this paper.

\section{Discretization by finite element method and pro\-perties}\label{Discretization}

In this section, we consider the finite element method and a polygonal domain $\Omega \subset \mathbb{R}^n$ with a regular triangulation with Lagrange triangles of type 1, constituted by affine-equivalent finite element of class $C^0$ being $h$ the parameter of the finite element approximation which goes to zero \cite{BS,Ci}. We can take $h$ equal to the longest side of the triangles $T \in \tau_h$ and we can approximate the sets $V, V_0$ and $K$ by:
$$
V_h=\left\{v_h \in C^0(\bar{\Omega}) / v_h |_{T} \in P_1(T), \forall T \in \tau_h\right\},
$$
$$
V_{0 h}=\left\{v_h \in V_h / v_h =0 \,\,\text{on}\,\, \Gamma_1\right\}, \quad K_h=b+V_{0 h}
$$
where $P_1$ is the set of the polymonials of degree less than or equal to 1. Let $\pi_h:C^0(\bar{\Omega}) \rightarrow V_h$ be the corresponding linear interpolation operator. Then there exists a constant $c_0>0$ (independent of $h$) such that $\forall v \in H^r(\Omega)$, $1<r \leq 2$, \cite{BS}:
\begin{equation}\label{Estim1}
\left\|v-\pi_h(v)\right\|_H \leq c_0 h^r\|v\|_r
\end{equation} 
\begin{equation}\label{Estim2}
\left\|v-\pi_h(v)\right\|_V \leq c_0 h^{r-1}\|v\|_r.
\end{equation} 
The discrete cost functional $J_h, J_{h \alpha}: H\times Q \rightarrow \mathbb{R}_0^{+}$ are defined by:
\begin{equation}\label{Jh}
J_h(g,q)=\frac{1}{2}\left\|u_{h gq}-z_d\right\|_H^2+\frac{M_1}{2}\|g\|_H^2+\frac{M_2}{2}\|q\|_Q^2
\end{equation}
\begin{equation}\label{Jhalfa}
J_{h \alpha}(g,q)=\frac{1}{2}\left\|u_{h \alpha gq}-z_d\right\|_H^2+\frac{M_1}{2}\|g\|_H^2+\frac{M_2}{2}\|q\|_Q^2.
\end{equation}
where $u_{h gq}$ and $u_{h\alpha gq}$ are the discrete system states defined as the solution of the following discrete elliptic variational equalities \cite{KS, Ta1}:
\begin{equation}\label{Variacuh}
u_{h gq} \in K_h: \quad a\left(u_{h gq}, v_h\right)=\left(g, v_h\right)_H- (q, v_h)_Q, \quad \forall v_h \in V_{0 h}, 
\end{equation}
\begin{equation}\label{Variacuhalfa}
u_{h \alpha gq} \in V_h: \quad a_\alpha\left(u_{h \alpha gq}, v_h\right)=\left(g, v_h\right)_H- (q,v_h)_Q +\alpha \int_{\Gamma_1} b v_h d \gamma, \quad \forall v_h \in V_h.
\end{equation}
The corresponding discrete distributed optimal control problems consists in finding $(\overline{g}_{h},\overline{q}_{h}), (\overline{g}_{h \alpha},\overline{q}_{h \alpha}) \in H\times Q$ such that:
\begin{equation}\label{Ph}
\text {Problem}\left(P_h\right): \quad J_h\left(\overline{g}_{h},\overline{q}_{h}\right)=\underset{(g,q) \in H\times Q}{\operatorname{Min}} J_h(g,q),
\end{equation}
\begin{equation}\label{Phalfa}
\text{Problem}\left(P_{h \alpha}\right): \quad J_{h \alpha}\left(\overline{g}_{h \alpha},\overline{q}_{h \alpha}\right)=\underset{(g,q) \in H\times Q}{\operatorname{Min}} J_{h\alpha}(g,q)
\end{equation}
and their corresponding discrete adjoint states $p_{h gq}$ and $p_{h \alpha gq}$ are defined respectively as the solution of the following discrete elliptic variational equalities:
\begin{equation}\label{Variacph}
p_{h gq} \in V_{0 h}: \quad  a\left(p_{h gq}, v_h\right)=\left(u_{h gq}-z_d, v_h\right)_H, \quad \forall v_h \in V_{0 h} 
\end{equation}
\begin{equation}\label{Variacphalfa}
p_{h \alpha gq} \in V_h: \quad a_\alpha\left(p_{h \alpha gq}, v_h\right)=\left(u_{h \alpha gq}-z_d, v_h\right)_H, \quad \forall v_h \in V_h.
\end{equation}
\begin{Remark} We note that the discrete (in the d-dimensional space) distributed optimal control problem $\left(P_h\right)$ and $\left(P_{h \alpha}\right)$ are still an infinite dimensional optimal control problem since the control space is not discretized.
\end{Remark}
\begin{Lemma}
\begin{itemize}
\item [(i)] For all $(g,q) \in H\times Q$, $b>0$ on $\Gamma_1$, there exist unique solutions $u_{h gq} \in K_h$ and $p_{h gq} \in V_{0 h}$ of the elliptic variational equalities (\ref{Variacuh}) and (\ref{Variacph}) respectively, and $u_{h\alpha gq} \in V_h$ and $p_{h\alpha gq} \in V_h$ of the elliptic variational equalities (\ref{Variacuhalfa}) and (\ref{Variacphalfa}), respectively.
\item [(ii)] The operators $(g,q) \in H\times Q \rightarrow u_{h gq} \in V$, and $(g,q) \in H\times Q \rightarrow u_{h\alpha g q} \in V$ are Lipschitzians, i.e., $\forall (g_1,q_1), (g_2,q_2) \in H\times Q, \forall h>0$
$$
\left\|u_{h g_2 q_2}-u_{h g_1 q_1}\right\|_V \leq \frac{(1+||\gamma||)\sqrt{2}}{\lambda}\left\|(g_2,q_2)-(g_1,q_1)\right\|_{H\times Q},
$$
$$
\left\|u_{h \alpha g_2 q_2}-u_{h \alpha g_1 q_1}\right\|_V \leq \frac{(1+||\gamma||)\sqrt{2}}{\lambda_\alpha}\left\|(g_2,q_2)-(g_1,q_1)\right\|_{H\times Q}. 
$$
where $||\gamma||$ is the norm of the trace operator.
\item [(iii)] We have, $\forall (f,\eta)\in H\times Q$ the following equalities:
$$
a(p_{hgq},u_{hf\eta}-u_{h00})=(f,p_{hgq})_H -(\eta, p_{hgq})_Q
$$
$$
a_{\alpha}(p_{h\alpha gq},u_{h\alpha f\eta}-u_{h\alpha 00})=(f,p_{h\alpha gq})_H -(\eta, p_{h\alpha gq})_Q
$$
where $u_{h00}$ and $u_{h\alpha 00}$ are the unique solutions for data $g=0$ and $q=0$, to the problems (\ref{Variacuh}) and (\ref{Variacuhalfa}), respectively.
\item [(iv)] The operators $(g,q) \in H\times Q \rightarrow p_{h gq} \in V_{0 h}$, and $(g,q) \in H \times Q\rightarrow p_{h\alpha gq} \in V_h$ are Lipschitzians and strictly monotones, i.e., $\forall (g_1,q_1), (g_2,q_2) \in H\times Q, \forall h>0$, we have:
$$
\text {a) } \left(p_{hg_2 q_2}-p_{hg_1 q_1}, g_2-g_1\right)_H - \left(p_{hg_2 q_2}-p_{hg_1 q_1}, q_2-q_1\right)_Q=\left\|u_{h g_2 q_2}-u_{h g_1 q_1}\right\|_H^2 \geq 0,
$$
\begin{equation*}
\begin{split}
 \text {b)} \left(p_{h\alpha g_2 q_2}-p_{h\alpha g_1 q_1}, g_2-g_1\right)_H - \left(p_{h\alpha g_2 q_2}-p_{h\alpha g_1 q_1}, q_2-q_1\right)_Q &= \left\|u_{h \alpha g_2 q_2}-u_{h\alpha g_1 q_1}\right\|_H^2 \\ & \geq 0, 
\end{split}
\end{equation*}
$$
\text {c) } \left\|p_{h g_2 q_2}-p_{h g_1 q_1}\right\|_V \leq \frac{(1+||\gamma||)\sqrt{2}}{\lambda^2}\left\|(g_2,q_2)-(g_1,q_1)\right\|_{H\times Q},
$$
$$
 \text {d) }\left\|p_{h\alpha g_2 q_2}-p_{h\alpha g_1 q_1}\right\|_V \leq \frac{(1+||\gamma||)\sqrt{2}}{\lambda_{\alpha}^2}\left\|(g_2,q_2)-(g_1,q_1)\right\|_{H\times Q}. 
$$
\end{itemize}
\end{Lemma}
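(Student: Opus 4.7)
All four claims are classical consequences of Lax--Milgram, the coercivity estimates (\ref{coercive}) and (\ref{coercivealfa}), the symmetry of the bilinear forms $a$ and $a_\alpha$, and the continuity of the trace operator $\gamma$. The single structural observation that will be used repeatedly is that $K_h=b+V_{0h}$ is affine, so the difference of any two elements of $K_h$ lies in the discrete test space $V_{0h}$; for the $\alpha$-problem the analogue is trivial since $V_h$ is already linear.

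For part (i), since $b$ is a positive constant it belongs to $V_h$, and I would write $u_{hgq}=b+w_h$ with $w_h\in V_{0h}$; Lax--Milgram applied on $V_{0h}$ with the bilinear form $a$ and the linear form $v_h\mapsto(g,v_h)_H-(q,v_h)_Q-a(b,v_h)$ gives existence and uniqueness, using $V_{0h}$-coercivity from (\ref{coercive}) together with Cauchy--Schwarz boundedness. The adjoint equation (\ref{Variacph}) is a direct Lax--Milgram problem on $V_{0h}$. For the $\alpha$-equations I apply Lax--Milgram on $V_h$ using coercivity (\ref{coercivealfa}).

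For part (ii), I subtract the two state equations (\ref{Variacuh}) for data $(g_i,q_i)$, test against the difference $u_{hg_2q_2}-u_{hg_1q_1}\in V_{0h}$, and combine coercivity with continuity of $\gamma$ and the elementary bound $\|g\|_H+\|q\|_Q\leq\sqrt{2}\,\|(g,q)\|_{H\times Q}$ before dividing through by the $V$-norm of the difference; the $\alpha$-case is identical with $\lambda_\alpha$ in place of $\lambda$. For part (iii) the key point is that $u_{hf\eta}-u_{h00}$ is an admissible test in (\ref{Variacph}) (resp.\ lies in $V_h$ for (\ref{Variacphalfa})), because the Dirichlet lift and the $\alpha b$ boundary term cancel in the subtraction; testing the subtracted state equation with $p_{hgq}$ (resp.\ $p_{h\alpha gq}$) and invoking symmetry of $a$ (resp.\ $a_\alpha$) yields the stated equalities.

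Parts (iv)(a)--(b) then follow by specialising (iii) to $(f,\eta)=(g_2-g_1,q_2-q_1)$ and comparing with the subtracted adjoint equation tested against $u_{hg_2q_2}-u_{hg_1q_1}\in V_{0h}$ (resp.\ $V_h$), which produces $\|u_{hg_2q_2}-u_{hg_1q_1}\|_H^2$ on the right and hence both the identity and the non-negativity. Parts (iv)(c)--(d) are obtained by testing the subtracted adjoint equations against the adjoint difference itself, using Cauchy--Schwarz and coercivity once to bound the $V$-norm of this difference by $\|u_{hg_2q_2}-u_{hg_1q_1}\|_V$, and then inserting the Lipschitz estimate from (ii); the factor $\lambda^2$ (resp.\ $\lambda_\alpha^2$) arises from applying coercivity once in each of the two steps. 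The only point requiring any care is the bookkeeping to ensure that each function used as a test actually belongs to the appropriate discrete space, and I do not anticipate a substantive obstacle beyond that.
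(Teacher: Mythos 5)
Your proposal is correct and follows essentially the same route the paper indicates: Lax--Milgram on the affine/linear discrete spaces $K_h=b+V_{0h}$ and $V_h$, subtraction of the variational equalities combined with the coercivity constants $\lambda$, $\lambda_{\alpha}$ and the trace bound, and the symmetry of $a$ and $a_{\alpha}$ for the adjoint identities. The paper's own proof is only a one-line citation of these same ingredients, so your write-up merely supplies the standard details it delegates to the references.
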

\begin{proof}
We use the Lax-Milgram Theorem, the variational equalities (\ref{Variacuh}), (\ref{Variacuhalfa}), (\ref{Variacph}) and (\ref{Variacphalfa}), the coerciveness (\ref{coercive}) and (\ref{coercivealfa}) and following \cite{GaTa2, Li, Ta4, Ta5}.
\end{proof}
\begin{Theorem} 
\begin{itemize}
\item [(i)] The discrete cost functional $J_h$ and $J_{h \alpha}$ are $H$-elliptic and strictly convex applications, that is, $\forall (g_1,q_1), (g_2,q_2) \in H\times Q, \forall t \in[0,1]$, we have:
$$
\begin{gathered}
(1-t) J_h\left(g_2,q_2\right)+t J_h\left(g_1,q_1\right)-J_h\left((1-t)(g_2,q_2)+t(g_1,q_1)\right)\\
=\frac{t(1-t)}{2}\left\|u_{h g_2 q_2}-u_{h g_1 q_1}\right\|_H^2+M_1 \frac{t(1-t)}{2}\left\|g_2-g_1\right\|_H^2 + M_2 \frac{t(1-t)}{2}\left\|q_2-q_1\right\|_Q^2\\
\geq m \frac{t(1-t)}{2}\left\|(g_2,q_2)-(g_1,q_1)\right\|_{H\times Q}^2, 
\end{gathered}
$$
and
$$
\begin{gathered}
(1-t) J_{h \alpha}\left(g_2,q_2\right)+t J_{h \alpha}\left(g_1,q_1\right)-J_{h \alpha}\left((1-t)(g_2,q_2)+t(g_1,q_1)\right)\\
=\frac{t(1-t)}{2}\left\|u_{h \alpha g_2 q_2}-u_{h \alpha g_1 q_1}\right\|_H^2+M_1 \frac{t(1-t)}{2}\left\|g_2-g_1\right\|_H^2 +M_2 \frac{t(1-t)}{2}\left\|q_2-q_1\right\|_Q^2 \\
\geq m \frac{t(1-t)}{2}\left\|(g_2,q_2)-(g_1,q_1)\right\|_{H\times Q}^2, 
\end{gathered}
$$
where $m=\min\{M_1,M_2\}$.
\item [(ii)] There exist a unique optimal control $(\overline{g}_h,\overline{q}_h) \in H\times Q$ and $(\overline{g}_{h\alpha},\overline{q}_{h\alpha}) \in H\times Q$ that satisfy the optimization problems (\ref{Ph}) and (\ref{Phalfa}), respectively.
\item [(iii)] $J_h$ and $J_{h\alpha}$ are Gâteaux differenciable applications and their derivatives are given by the following expressions, $\forall (f,\eta) \in H\times Q$, $\forall h>0$:
$$
J_h^{\prime}(g,q)(f-g,\eta -q)=(f-g,p_{hgq}+M_1 g)_H+(\eta -q,M_2 q -p_{hgq})_Q,
$$
$$
J_{h\alpha}^{\prime}(g,q)(f-g,\eta -q)=(f-g,p_{h\alpha gq}+M_1 g)_H+(\eta -q,M_2 q -p_{h\alpha gq})_Q.
$$
\item [(iv)] The optimality condition for the problems (\ref{Ph}) and (\ref{Phalfa}) are given by, $\forall (f,\eta)\in H\times Q$:
$$
J_h^{\prime}(\overline{g}_h,\overline{q}_h)(f,\eta)=0 \Leftrightarrow (f,p_{h\overline{g}_h\,\overline{q}_h}+M_1 \overline{g}_h)_H+(\eta,M_2 \overline{q}_h -p_{h\overline{g}_h\,\overline{q}_h})_Q=0
$$
$$J_{h\alpha}^{\prime}(\overline{g}_{h\alpha},\overline{q}_{h\alpha})(f,\eta)=0 \Leftrightarrow (f,p_{h\overline{g}_{h\alpha}\,\overline{q}_{h\alpha}}+M_1 \overline{g}_{h\alpha})_H+(\eta ,M_2 \overline{q}_{h\alpha} -p_{h\overline{g}_{h\alpha}\,\overline{q}_{h\alpha}})_Q=0.
$$
\item [(v)] $J_h^{\prime}$ and $J_{h \alpha}^{\prime}$ are Lipschitzian and strictly monotone operators, i.e., $\forall (g_1,q_1), (g_2,q_2) \in H\times Q, \forall h>0$, we have:
$$
\left\|J_h^{\prime}(g_2,q_2)-J_h^{\prime}(g_1,q_1)\right\|_{H\times Q} \leq\left(M+\frac{(1+||\gamma||)^2}{\lambda^2}\right)\sqrt{2}\left\|(g_2,q_2)-(g_1,q_1)\right\|_{H\times Q},
$$
\begin{equation*}
\begin{split}
\left\langle J_h^{\prime}(g_2,q_2)-J_h^{\prime}(g_1,q_1), (g_2,q_2)-(g_1,q_1)\right\rangle &=\left\|u_{h g_2 q_2}-u_{h g_1 q_1}\right\|_H^2\\ & +M_1\left\|g_2-g_1\right\|_H^2 +M_2\left\|q_2-q_1\right\|_Q^2 \\ & \geq m\left\|(g_2,q_2)-(g_1,q_1)\right\|_{H\times Q}^2,
\end{split}
\end{equation*}
$$
\left\|J_{h \alpha}^{\prime}(g_2,q_2)-J_{h\alpha}^{\prime}(g_1,q_1)\right\|_{H\times Q} \leq\left(M+\frac{(1+||\gamma||)^2}{\lambda_{\alpha}^2}\right)\sqrt{2}\left\|(g_2,q_2)-(g_1,q_1)\right\|_{H\times Q},
$$
\begin{equation*}
\begin{split}
\left\langle J_{h\alpha}^{\prime}(g_2,q_2)-J_{h\alpha}^{\prime}(g_1,q_1),(g_2,q_2)-(g_1,q_1)\right\rangle &=\left\|u_{h\alpha g_2 q_2}-u_{h\alpha g_1 q_1}\right\|_H^2 \\& +M_1\left\|g_2-g_1\right\|_H^2 +M_2\left\|q_2-q_1\right\|_Q^2 \\ & \geq m\left\|(g_2,q_2)-(g_1,q_1)\right\|_{H\times Q}^2
\end{split}
\end{equation*}
where $M=\max\{M_1,M_2\}$ and $m=\min\{M_1,M_2\}$.
\end{itemize}
\end{Theorem}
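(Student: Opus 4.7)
The plan is to treat the five items as consequences of the fact that, for each fixed $h>0$ and each fixed $\alpha>0$, the solution maps $(g,q)\mapsto u_{hgq}$ and $(g,q)\mapsto u_{h\alpha gq}$ are affine (their shifts by $u_{h00}$, resp.\ $u_{h\alpha 00}$, lie in $V_{0h}$, resp.\ $V_h$, and depend linearly on $(g,q)$), together with the Lax--Milgram identities and Lipschitz bounds already collected in the preceding Lemma. Everything said below transcribes line-for-line when $a$, $p_{hgq}$ and $\lambda$ are replaced by $a_\alpha$, $p_{h\alpha gq}$ and $\lambda_\alpha$, so I will describe only the $h$-case explicitly.

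For (i), I would exploit the affinity above to write $u_{h,(1-t)(g_2,q_2)+t(g_1,q_1)}=(1-t)u_{hg_2q_2}+tu_{hg_1q_1}$, so that each of the three quadratic pieces of $J_h$ is a squared norm of a convex combination. Applying the elementary Hilbert-space identity $(1-t)\|x\|^2+t\|y\|^2-\|(1-t)x+ty\|^2=t(1-t)\|x-y\|^2$ separately to $u_{hgq}-z_d$, to $g$, and to $q$ yields the displayed equality, and the $H\times Q$-ellipticity with constant $m=\min\{M_1,M_2\}$ follows by discarding the non-negative state term and bounding $M_1\|g_2-g_1\|_H^2+M_2\|q_2-q_1\|_Q^2\geq m\|(g_2,q_2)-(g_1,q_1)\|_{H\times Q}^2$. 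Part (ii) then follows from the standard direct method: strict convexity from (i), continuity of $J_h$ via the Lipschitz estimate of the preceding Lemma, and the coercivity $J_h(g,q)\geq \tfrac{m}{2}\|(g,q)\|_{H\times Q}^2$ on the reflexive Hilbert space $H\times Q$ deliver existence and uniqueness of $(\overline{g}_h,\overline{q}_h)$.

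For (iii) and (iv), I would compute the directional derivative at $(g,q)$ in the direction $(f-g,\eta-q)$ by the chain rule, noting that the directional derivative $u'_h$ of $u_{hgq}$ in that direction is the unique element of $V_{0h}$ solving $a(u'_h,v_h)=(f-g,v_h)_H-(\eta-q,v_h)_Q$. This gives $J'_h(g,q)(f-g,\eta-q)=(u_{hgq}-z_d,u'_h)_H+M_1(g,f-g)_H+M_2(q,\eta-q)_Q$. I would then pick $v_h=u'_h\in V_{0h}$ in (\ref{Variacph}) to rewrite $(u_{hgq}-z_d,u'_h)_H=a(p_{hgq},u'_h)$ and apply part (iii) of the preceding Lemma (equivalently, test (\ref{Variacuh}) with $p_{hgq}$) to obtain $a(p_{hgq},u'_h)=(f-g,p_{hgq})_H-(\eta-q,p_{hgq})_Q$. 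Regrouping by the two directional arguments produces the expression stated in (iii). The optimality condition (iv) follows because the controls are unconstrained in $H\times Q$ and $J_h$ is convex and Gâteaux-differentiable, so $(\overline{g}_h,\overline{q}_h)$ minimizes $J_h$ iff $J'_h(\overline{g}_h,\overline{q}_h)=0$ in $(H\times Q)^*$; evaluating the formula of (iii) on an arbitrary $(f,\eta)$ (in place of $(f-g,\eta-q)$) gives the stated equivalence.

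For (v), the strict-monotonicity half is clean: subtracting the two formulas of (iii) and pairing against $(g_2-g_1,q_2-q_1)$ yields $M_1\|g_2-g_1\|_H^2+M_2\|q_2-q_1\|_Q^2$ plus the $p$-cross term $(p_{hg_2q_2}-p_{hg_1q_1},g_2-g_1)_H-(p_{hg_2q_2}-p_{hg_1q_1},q_2-q_1)_Q$, which equals $\|u_{hg_2q_2}-u_{hg_1q_1}\|_H^2\geq 0$ by part (iv)(a) of the preceding Lemma; the bound $\geq m\|\cdot\|_{H\times Q}^2$ is then immediate. The Lipschitz estimate is the main bookkeeping obstacle: identifying $J'_h(g,q)\in(H\times Q)^*$ with the element $(p_{hgq}+M_1 g,\,M_2 q-p_{hgq}|_{\Gamma_2})$ via Riesz, I would bound the $H$-component of the difference by $\|p_{hg_2q_2}-p_{hg_1q_1}\|_V+M_1\|g_2-g_1\|_H$ and the $Q$-component by $\|\gamma\|\,\|p_{hg_2q_2}-p_{hg_1q_1}\|_V+M_2\|q_2-q_1\|_Q$, combine them through $\sqrt{\|\cdot\|_H^2+\|\cdot\|_Q^2}\leq(1+\|\gamma\|)\|\cdot\|_V$ on the $p$-part, and then insert the bound of part (iv)(c) of the preceding Lemma. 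Elementary rearrangement then collects $M=\max\{M_1,M_2\}$, the trace factor $(1+\|\gamma\|)^2$ and the coerciveness constant $\lambda$ into the stated constant $(M+(1+\|\gamma\|)^2/\lambda^2)\sqrt{2}$. The $\alpha$-statements in each item are obtained by the same argument, substituting $a_\alpha$, (\ref{Variacuhalfa}), (\ref{Variacphalfa}), parts (ii), (iii), (iv)(b,d) of the preceding Lemma, and replacing $\lambda$ by $\lambda_\alpha$.
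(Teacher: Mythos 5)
Your proposal is correct and follows essentially the same route as the paper, whose proof is only a one-line citation of the definitions (\ref{Jh})--(\ref{Jhalfa}), the variational equalities (\ref{Variacuh})--(\ref{Variacphalfa}), the coerciveness (\ref{coercive})--(\ref{coercivealfa}) and the preceding Lemma, following the standard Lions/Gariboldi--Tarzia argument; your write-up simply makes that argument explicit (affinity of the solution map, the parallelogram identity, the adjoint-state representation of $J_h'$, and the Lipschitz/monotonicity bounds from the Lemma). The only cosmetic remark is that your Lipschitz computation actually yields a slightly sharper constant than the stated $\left(M+\frac{(1+\|\gamma\|)^2}{\lambda^2}\right)\sqrt{2}$, which of course still implies the claimed bound.
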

\begin{proof}
We use the definitions (\ref{Jh}) and (\ref{Jhalfa}), the elliptic variational equalities (\ref{Variacuh}) and (\ref{Variacuhalfa}) and the coerciveness (\ref{coercive}) and (\ref{coercivealfa}), following \cite{GaTa1, GaTa2, Li, Ta4, Ta5}. 
\end{proof}
We define the operators
\begin{equation*}
W_h: H\times Q \rightarrow V_{0 h} \times Q \subset V_0 \times  Q\subset H \times Q \quad \text{such that} 
\end{equation*}
\begin{equation}\label{Wh}
W_h(g,q)=(-\frac{1}{M_1} p_{hgq},\frac{1}{M_2} \gamma (p_{hgq})) 
\end{equation}
\begin{equation*}
W_{h\alpha}: H\times Q \rightarrow V_h \times Q \subset V \times Q \subset H \times Q \quad \text{such that}
\end{equation*}
\begin{equation}\label{Whalfa}
W_{h \alpha}(g,q)=(-\frac{1}{M_1} p_{h\alpha gq},\frac{1}{M_2} \gamma (p_{h\alpha gq})).
\end{equation}
and we prove the following result.
\begin{Theorem} We have that:
\begin{itemize}
\item [(i)] $W_h$ and $W_{h \alpha}$ are Lipschitzian operators, that is, $\forall (g_1,q_1), (g_2,q_2) \in H\times Q, h>0$:
$$\left\|W_h(g_2,q_2)-W_h(g_1,q_1)\right\|_{H\times Q} \leq C_0\left\|(g_2,q_2)-(g_1,q_1)\right\|_{H\times Q},$$ 
$$\left\|W_{h \alpha}(g_2,q_2)-W_{h\alpha}(g_1,q_1)\right\|_{H\times Q} \leq C_{0\alpha}\left\|(g_2,q_2)-(g_1,q_1)\right\|_{H\times Q}$$
with $C_0=\frac{\sqrt{2}}{\lambda^2}\sqrt{\frac{1}{M_1^2}+\frac{||\gamma||^2}{M_2^2}}(1+||\gamma||)$ and $C_{0\alpha}=\frac{\sqrt{2}}{\lambda_{\alpha}^2}\sqrt{\frac{1}{M_1^2}+\frac{||\gamma||^2}{M_2^2}}(1+||\gamma||)$.
\item [(ii)] $W_h\left(W_{h\alpha}\right)$ is a contraction operator if and only if $C_0 <1$ ($C_{0\alpha}<1$).
\item [(iii)] If data satisfy inequality $C_0 <1$ ($C_{0\alpha}<1$), then the unique solution $(\overline{g}_h,\overline{q}_h)$ ($(\overline{g}_{h\alpha},\overline{q}_{h\alpha})$) to the discrete optimal control $P_h$ ($P_{h\alpha}$) can be obtained as the unique fixed point of the operator $W_h\left(W_{h\alpha}\right)$, that is:
$$
W_h(\overline{g}_h,\overline{q}_h)=(\overline{g}_h,\overline{q}_h)\quad \text{and}\quad W_{h\alpha}(\overline{g}_{h\alpha},\overline{q}_{h\alpha})=(\overline{g}_{h\alpha},\overline{q}_{h\alpha}).
$$
\end{itemize}
\end{Theorem}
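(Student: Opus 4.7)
My plan is to prove (i) by direct computation from definition (\ref{Wh}) together with Lemma (iv) parts (c)--(d), to deduce (ii) immediately from the definition of a contraction, and to settle (iii) by combining the Banach fixed point theorem with the optimality condition of the preceding theorem.

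For (i), unpacking the definition of $W_h$ gives
\begin{equation*}
\|W_h(g_2,q_2) - W_h(g_1,q_1)\|_{H\times Q}^2 = \frac{1}{M_1^2}\|p_{hg_2q_2}-p_{hg_1q_1}\|_H^2 + \frac{1}{M_2^2}\|\gamma(p_{hg_2q_2}-p_{hg_1q_1})\|_Q^2.
\end{equation*}
Using $\|\cdot\|_H \le \|\cdot\|_V$ on the first term and continuity of the trace $\|\gamma(v)\|_Q \le \|\gamma\|\,\|v\|_V$ on the second, I factor out a common $\|p_{hg_2q_2}-p_{hg_1q_1}\|_V^2$ multiplied by $\frac{1}{M_1^2}+\frac{\|\gamma\|^2}{M_2^2}$. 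Lemma (iv)(c) then bounds the remaining $V$-norm by $\frac{(1+\|\gamma\|)\sqrt{2}}{\lambda^2}\|(g_2,q_2)-(g_1,q_1)\|_{H\times Q}$, and taking square roots yields exactly the constant $C_0$. The same argument with Lemma (iv)(d) and $\lambda_\alpha$ in place of $\lambda$ produces the $W_{h\alpha}$ bound with constant $C_{0\alpha}$.

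Statement (ii) is tautological: by definition, a Lipschitzian operator is a contraction precisely when its Lipschitz constant is strictly below $1$, and the admissible constants identified in (i) serve this purpose.

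For (iii), since $H\times Q$ is a Hilbert space (hence complete), the Banach fixed point theorem applied to the contraction $W_h$ (resp.\ $W_{h\alpha}$) produces a unique fixed point. It remains only to verify that $(\overline{g}_h,\overline{q}_h)$ is such a fixed point. Testing the optimality condition of the preceding theorem with pairs of the form $(f,0)$ and $(0,\eta)$, as $f\in H$ and $\eta\in Q$ range independently, I obtain the two scalar identities
\begin{equation*}
p_{h\overline{g}_h\overline{q}_h} + M_1\overline{g}_h = 0 \text{ in } H, \qquad M_2\overline{q}_h - \gamma(p_{h\overline{g}_h\overline{q}_h}) = 0 \text{ in } Q,
\end{equation*}
which rearrange to $W_h(\overline{g}_h,\overline{q}_h) = (\overline{g}_h,\overline{q}_h)$; the argument for $W_{h\alpha}$ is identical. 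The only mildly delicate point is the factorization in (i): one must convert the $H$- and $Q$-norms of $p_{hg_2q_2}-p_{hg_1q_1}$ into a single multiple of its $V$-norm \emph{before} invoking Lemma (iv), so that the trace constant $\|\gamma\|$ is collected in the prefactor rather than inside the control estimate. Apart from this bookkeeping, no step presents a substantive obstacle.
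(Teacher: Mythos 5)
Your proof is correct and takes essentially the same route the paper intends: the paper's own proof is a one-line appeal to the definitions (\ref{Wh})--(\ref{Whalfa}) and to the continuous-case argument of \cite{GaTa2}, and your computation (the trace bound combined with the Lipschitz estimates c)--d) for the discrete adjoint states in part (i), and Banach's fixed point theorem together with the discrete optimality condition in part (iii)) is precisely the expansion of that citation. No gaps.
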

\begin{proof} 
This results by using the definitions (\ref{Wh}) and (\ref{Whalfa}), and following \cite{GaTa2}.
\end{proof}

\section{Convergence of the discrete distributed-boundary optimal control problems ($P_h$) to ($P$), and ($P_{h\alpha}$) to ($P_{\alpha}$)
 when $h\rightarrow 0^{+}$}\label{Convergenceh}

In this section, we obtain error estimates between the optimal controls, system and adjoint states of the discrete simultaneous distributed-boundary optimal control problems $(P_h)$ and $(P_{h\alpha})$ and convergence results of the discrete optimal control problems $(P_h)$ to $(P)$ and $(P_{h\alpha})$ to $(P_{\alpha})$ when $h\rightarrow 0^{+}$, for each $\alpha >0$.
\begin{Lemma}\label{lem1}
(i) If the continuous system states and the continuous adjoint states have the regularity $u_{gq}$, $u_{\alpha g q}$, $p_{gq}$, $p_{\alpha gq} \in H^r(\Omega)$ $(1<r \leq 2)$, then $\forall \alpha>0$, $\forall (g,q) \in H\times Q$, $h>0$, we have the following estimations:
\begin{equation}\label{Est1}
\left\|u_{gq}-u_{h gq}\right\|_V \leq \frac{c_0}{\sqrt{\lambda}}\left\|u_{gq}\right\|_r h^{r-1}, \quad\left\|p_{gq}-p_{h gq}\right\|_V \leq c_1 h^{r-1}
\end{equation}
\begin{equation}\label{Est2}
\left\|u_{h \alpha gq}-u_{\alpha gq}\right\|_V \leq c_{0\alpha} h^{r-1}, \quad\left\|p_{h \alpha gq}-p_{\alpha gq}\right\|_V \leq c_{1\alpha} h^{r-1}
\end{equation}
where $c_0$ (given in (\ref{Estim1}) and (\ref{Estim2})), $c_1$, $c_{0\alpha}$ and $c_{1\alpha}$ are constants independents of $h$.
\newline
(ii) We have the following convergences, $\forall (g,q) \in H\times Q$:
$$
\lim _{h \rightarrow 0^{+}}\left\|u_{gq}-u_{h gq}\right\|_V=0, \quad \lim _{h \rightarrow 0^{+}}\left\|p_{gq}-p_{h gq}\right\|_V=0, 
$$ 
$$
\lim _{h \rightarrow 0^{+}}\left\|u_{h\alpha gq}-u_{\alpha gq}\right\|_V=0, \quad \lim _{h \rightarrow 0^{+}}\left\|p_{h \alpha gq}-p_{\alpha gq}\right\|_V=0, \quad \forall \alpha>0.
$$
\end{Lemma}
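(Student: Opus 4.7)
The plan is to handle the four error bounds in part (i) by classical Céa-type arguments and to deduce part (ii) by a density argument that does not require the additional regularity.

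I would first address $\|u_{gq}-u_{hgq}\|_V$. Subtracting (\ref{Variacuh}) from (\ref{Pvariacional}) restricted to $V_{0h}\subset V_0$ yields the Galerkin orthogonality $a(u_{gq}-u_{hgq},v_h)=0$ for every $v_h\in V_{0h}$. Since $b$ is constant and Lagrange $P_1$ nodal interpolation preserves constant traces along edges on $\Gamma_1$, the interpolant $\pi_h u_{gq}$ lies in $K_h$, so $\pi_h u_{gq}-u_{hgq}\in V_{0h}$ and both $u_{gq}-u_{hgq}$ and $u_{gq}-\pi_h u_{gq}$ belong to $V_0$. The standard manipulation gives $\|u_{gq}-u_{hgq}\|_{V_0}\le \|u_{gq}-\pi_h u_{gq}\|_{V_0}$; combining with the Poincaré inequality $\|v\|_V\le \|v\|_{V_0}/\sqrt{\lambda}$ from (\ref{coercive}) and the interpolation estimate (\ref{Estim2}) produces the first bound of (\ref{Est1}) with constant $c_0/\sqrt{\lambda}$. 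The estimate for $\|u_{\alpha gq}-u_{h\alpha gq}\|_V$ is analogous and in fact simpler, since $V_h\subset V$ directly and the coercivity (\ref{coercivealfa}) of $a_\alpha$ with constant $\lambda_\alpha$ applies to the whole difference, yielding (\ref{Est2}) with a constant $c_{0\alpha}$ proportional to $\|u_{\alpha gq}\|_r/\sqrt{\lambda_\alpha}$.

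For the adjoint estimates I would split $p_{gq}-p_{hgq}=(p_{gq}-\pi_h p_{gq})+(\pi_h p_{gq}-p_{hgq})$. Subtracting (\ref{Variacph}) from (\ref{Padjvariacional}) gives $a(p_{gq}-p_{hgq},v_h)=(u_{gq}-u_{hgq},v_h)_H$ for $v_h\in V_{0h}$. Testing against $\pi_h p_{gq}-p_{hgq}\in V_{0h}$, applying coercivity, Cauchy--Schwarz and $\|\cdot\|_H\le\|\cdot\|_V$ yields
\[
\|\pi_h p_{gq}-p_{hgq}\|_V \le \tfrac{1}{\lambda}\|p_{gq}-\pi_h p_{gq}\|_V + \tfrac{C}{\lambda}\|u_{gq}-u_{hgq}\|_H.
\]
The triangle inequality, the interpolation estimate (\ref{Estim2}) applied to $p_{gq}$, and the already established $h^{r-1}$ bound on the primal error then deliver the second inequality in (\ref{Est1}) with an $h$-independent constant $c_1$ depending on $\|p_{gq}\|_r$ and $\|u_{gq}\|_r$. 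The same reasoning with $a_\alpha$ on $V_h$ provides $c_{1\alpha}$.

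For part (ii), I want convergence without the $H^r$-regularity, so the bounds from (i) cannot be used directly; instead I would invoke the density of $\bigcup_{h>0}V_{0h}$ in $V_0$, which is a classical consequence of the regularity of the triangulations. Writing $u_{gq}=b+u_0$ with $u_0\in V_0$, for each $\varepsilon>0$ there exists $v_h\in V_{0h}$ with $\|u_0-v_h\|_{V_0}<\varepsilon$, so the Céa-type inequality applied with $w_h=b+v_h\in K_h$ yields $\|u_{gq}-u_{hgq}\|_V\to 0$. The density of $\bigcup_{h>0}V_h$ in $V$ handles the $\alpha$-problem identically, and the adjoint convergences follow from a second Céa argument together with the primal $H$-convergence. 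I do not anticipate a deep conceptual obstacle; the only delicate points to record are that $\pi_h u_{gq}\in K_h$ crucially uses the constancy of $b$ on $\Gamma_1$ (so that nodal interpolation preserves the Dirichlet trace), and that the constants $c_{0\alpha},c_{1\alpha}$ blow up as $\alpha\to 0^+$ through the $\lambda_\alpha=\lambda_1\min\{1,\alpha\}$ factor, which is acceptable here since $\alpha$ is fixed in this section.
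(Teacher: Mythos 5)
Your proposal is correct and follows essentially the same route as the paper: the paper's (very terse) proof rests on exactly the identities you derive --- Galerkin orthogonality for the state errors and the adjoint error equation $a\left(p_{gq}-p_{hgq},\pi_h(p_{gq})-p_{hgq}\right)=\left(u_{gq}-u_{hgq},\pi_h(p_{gq})-p_{hgq}\right)_H$ tested against the interpolant --- combined with the coercivity constants $\lambda$, $\lambda_\alpha$ and the interpolation estimates (\ref{Estim1})--(\ref{Estim2}), yielding the same constant $c_0/\sqrt{\lambda}$ for the primal bound. Your density argument for part (ii) is a minor refinement the paper leaves implicit: it makes the convergences valid for arbitrary $(g,q)\in H\times Q$ without the $H^r$ regularity, whereas under that hypothesis they already follow trivially from part (i) since $h^{r-1}\to 0$.
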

\begin{proof}
By using the variational equalities (\ref{Pvariacional}), (\ref{Palfavariacional}), (\ref{Padjvariacional}), (\ref{Palfaadjvariacional}), (\ref{Variacuh}), (\ref{Variacuhalfa}), (\ref{Variacph}) and (\ref{Variacphalfa}), the coerciveness properties (\ref{coercive}) and (\ref{coercivealfa}), the estimations (\ref{Estim1}) and (\ref{Estim2}) and the following properties, $\forall (g,q) \in H\times Q$:
$$
a\left(p_{gq}-p_{h gq}, \pi_h\left(p_{gq}\right)-p_{h gq}\right)=\left(u_{gq}-u_{h gq}, \pi_h\left(p_{gq}\right)-p_{h gq}\right)
$$
$$
a_{\alpha}\left(p_{\alpha gq}-p_{h \alpha gq}, \pi_h\left(p_{\alpha gq}\right)-p_{h \alpha gq}\right)=\left(u_{h \alpha gq}-u_{\alpha gq}, \pi_h\left(p_{\alpha gq}\right)-p_{h \alpha gq}\right)
$$
following a similar method given in \cite{Ta4, Ta5}, the thesis holds.
\end{proof}
\begin{Theorem}
We consider the continuous system states and adjoint states have the regularities $u_{\overline{g}\,\overline{q}}, u_{\alpha \overline{g}_{\alpha}\overline{q}_{\alpha}}, p_{\overline{g}\,\overline{q}}, p_{\alpha \overline{g}_{\alpha}\overline{q}_{\alpha}} \in H^r(\Omega)$ $(1<r \leq 2)$:
\begin{itemize}
\item [i)] We have the following limits, $\forall \alpha>1$:
\begin{equation}\label{Lim1}
\lim _{h \rightarrow 0^{+}}\left\|(\overline{g}_{h},\overline{q}_{h})-(\overline{g},\overline{q})\right\|_{H\times Q}=0
\end{equation}
\begin{equation}\label{Lim2}
\lim _{h \rightarrow 0^{+}}\left\|u_{h \overline{g}_{h}\overline{q}_{h}}-u_{\overline{g}\,\overline{q}}\right\|_V=0, \quad \lim _{h \rightarrow 0^{+}}\left\|p_{h \overline{g}_{h}\overline{q}_{h}}-p_{\overline{g}\,\overline{q}}\right\|_V=0 
\end{equation}
\begin{equation}\label{Lim3}
\lim _{h \rightarrow 0^{+}}\left\|(\overline{g}_{h\alpha},\overline{q}_{h\alpha})-(\overline{g}_{\alpha},\overline{q}_{\alpha})\right\|_{H\times Q}=0
\end{equation}
\begin{equation}\label{Lim4}
\lim _{h \rightarrow 0^{+}}\left\|u_{h \alpha\overline{g}_{h\alpha}\overline{q}_{h\alpha}}-u_{\alpha\overline{g}_{\alpha}\,\overline{q}_{\alpha}}\right\|_V=0, \quad \lim _{h \rightarrow 0^{+}}\left\|p_{h \alpha\overline{g}_{h\alpha}\overline{q}_{h\alpha}}-p_{\alpha\overline{g}_{\alpha}\overline{q}_{\alpha}}\right\|_V=0.
\end{equation}
\item [ii)] If data $M_1$ and $M_2$ satisfy the following inequalities 
\begin{equation}\label{puntofijo}
\frac{\sqrt{2}}{\lambda^2}\sqrt{\frac{1}{M_1^2}+\frac{||\gamma||^2}{M_2^2}}(1+||\gamma||)<1 \quad \text{and}\quad \frac{\sqrt{2}}{\lambda_{\alpha}^2}\sqrt{\frac{1}{M_1^2}+\frac{||\gamma||^2}{M_2^2}}(1+||\gamma||)<1
\end{equation} 
we have the following error bonds:
\begin{equation}\label{cotaop1}
\left\|(\overline{g}_{h},\overline{q}_{h})-(\overline{g},\overline{q})\right\|_{H\times Q} \leq c h^{r-1}
\end{equation}
\begin{equation}\label{cotaop2}
\left\|u_{h \overline{g}_{h}\overline{q}_{h}}-u_{\overline{g}\,\overline{q}}\right\|_V \leq c h^{r-1},\quad \left\|p_{h \overline{g}_{h}\overline{q}_{h}}-p_{\overline{g}\,\overline{q}}\right\|_V \leq c h^{r-1}
\end{equation}
\begin{equation}\label{cotaop3}
\left\|(\overline{g}_{h\alpha},\overline{q}_{h\alpha})-(\overline{g}_{\alpha},\overline{q}_{\alpha})\right\|_{H\times Q} \leq c_{\alpha} h^{r-1}
\end{equation}
\begin{equation}\label{cotaop4}
\left\|u_{h \alpha\overline{g}_{h\alpha}\overline{q}_{h\alpha}}-u_{\alpha\overline{g}_{\alpha}\,\overline{q}_{\alpha}}\right\|_V \leq c_{\alpha} h^{r-1}, \quad\left\|p_{h \alpha\overline{g}_{h\alpha}\overline{q}_{h\alpha}}-p_{\alpha\overline{g}_{\alpha}\overline{q}_{\alpha}}\right\|_V \leq c_{\alpha} h^{r-1}
\end{equation}
where $c$ and $c_{\alpha}$ are different constants independents of $h$.
\end{itemize}
\end{Theorem}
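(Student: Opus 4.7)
The plan is to prove part (ii) first by exploiting the fixed-point characterization of the optimal controls, and then to establish part (i) in full generality by a standard weak-compactness and lower-semicontinuity argument for which (ii) is essentially a quantitative refinement.

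For part (ii), under hypothesis (\ref{puntofijo}) the operators $W, W_h$ (respectively $W_\alpha, W_{h\alpha}$) are strict contractions and the optimal controls satisfy $(\overline{g},\overline{q}) = W(\overline{g},\overline{q})$ and $(\overline{g}_h,\overline{q}_h) = W_h(\overline{g}_h,\overline{q}_h)$ by the preceding theorem. I would write
\[
(\overline{g}_h,\overline{q}_h) - (\overline{g},\overline{q}) = \bigl[W_h(\overline{g}_h,\overline{q}_h) - W_h(\overline{g},\overline{q})\bigr] + \bigl[W_h(\overline{g},\overline{q}) - W(\overline{g},\overline{q})\bigr].
\]
The first bracket has norm $\leq C_0 \|(\overline{g}_h,\overline{q}_h) - (\overline{g},\overline{q})\|_{H\times Q}$ by the contraction property. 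For the second bracket, the explicit definitions of $W_h$ and $W$ together with continuity of the trace bound its norm by a constant multiple of $\|p_{h\overline{g}\,\overline{q}} - p_{\overline{g}\,\overline{q}}\|_V$, which is $O(h^{r-1})$ by (\ref{Est1}). Rearranging yields (\ref{cotaop1}) with $c = c_1/(1-C_0)$. The state estimate (\ref{cotaop2}) then follows by splitting $u_{h\overline{g}_h\overline{q}_h} - u_{\overline{g}\,\overline{q}} = (u_{h\overline{g}_h\overline{q}_h} - u_{h\overline{g}\,\overline{q}}) + (u_{h\overline{g}\,\overline{q}} - u_{\overline{g}\,\overline{q}})$, estimating the first summand by the Lipschitz property of Lemma~2(ii) combined with (\ref{cotaop1}), and the second by (\ref{Est1}); the adjoint-state bound is analogous. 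The statements (\ref{cotaop3})--(\ref{cotaop4}) for $(P_{h\alpha}) \to (P_\alpha)$ repeat the argument with $C_{0\alpha}, \lambda_\alpha$ and the $\alpha$-version of Lemma \ref{lem1}, which is the reason for the separate constant $c_\alpha$.

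For part (i) without the contraction hypothesis, I would use the optimality inequality $J_h(\overline{g}_h,\overline{q}_h) \leq J_h(\overline{g},\overline{q})$. Since $u_{h\overline{g}\,\overline{q}} \to u_{\overline{g}\,\overline{q}}$ in $V$ by Lemma \ref{lem1}, the right-hand side converges to $J(\overline{g},\overline{q})$, so $\{(\overline{g}_h,\overline{q}_h)\}$ is bounded in $H\times Q$; extract a subsequence $(\overline{g}_h,\overline{q}_h) \rightharpoonup (g^*,q^*)$ weakly. Decomposing $u_{h\overline{g}_h\overline{q}_h} - u_{g^*q^*} = (u_{h\overline{g}_h\overline{q}_h} - u_{hg^*q^*}) + (u_{hg^*q^*} - u_{g^*q^*})$, the second term tends to zero strongly in $V$ by Lemma \ref{lem1}, while the first, thanks to the affine dependence of the discrete state on $(g,q)$ and the $h$-uniform Lipschitz bound of Lemma~2(ii), is bounded in $V$ and its weak limit in $V$ is zero; the compact embedding $V \hookrightarrow H$ then gives strong convergence in $H$. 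Weak lower semicontinuity of $J$ together with $J(g^*,q^*) \leq \liminf J_h(\overline{g}_h,\overline{q}_h) \leq \limsup J_h(\overline{g},\overline{q}) = J(\overline{g},\overline{q})$ and uniqueness of the minimizer force $(g^*,q^*) = (\overline{g},\overline{q})$. Strong convergence in $H\times Q$ is then upgraded from the norm-convergence $J_h(\overline{g}_h,\overline{q}_h) \to J(\overline{g},\overline{q})$ combined with weak convergence, via the standard Hilbert-space identity for squared norms. Finally, (\ref{Lim2}) follows by the same splitting as in (ii). The statements (\ref{Lim3})--(\ref{Lim4}) are identical, the hypothesis $\alpha > 1$ ensuring $\lambda_\alpha = \lambda_1$ so that all $h$-uniform bounds used in the compactness step remain valid.

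The main obstacle is precisely the passage to the limit in the quadratic term $\|u_{h\overline{g}_h\overline{q}_h} - z_d\|_H^2$, which demands converting weak convergence of the controls into strong convergence of the discrete states in $H$. This is handled by the decomposition above: the ``pure discretization'' piece $u_{hg^*q^*} - u_{g^*q^*}$ is governed by Lemma \ref{lem1}, while the ``continuity-in-data'' piece $u_{h\overline{g}_h\overline{q}_h} - u_{hg^*q^*}$ exploits linearity of $(g,q)\mapsto u_{hgq} - u_{h00}$, the $h$-uniform Lipschitz constant, and the Rellich--Kondrachov compactness of $V \hookrightarrow H$ to upgrade the weak to strong convergence.
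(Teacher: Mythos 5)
Your proposal follows essentially the same route as the paper. Part (ii) is exactly the paper's argument: the fixed-point identity $(\overline{g}_h,\overline{q}_h)-(\overline{g},\overline{q})=W_h(\overline{g}_h,\overline{q}_h)-W(\overline{g},\overline{q})$ split through $W_h(\overline{g},\overline{q})$, the contraction constant $C_0$ absorbing the first piece and the adjoint-state estimate (\ref{Est1}) controlling the second, which produces the paper's constant with denominator $1-C_0$ (your constant should also carry the factor $\sqrt{1/M_1^2+\|\gamma\|^2/M_2^2}$, a harmless bookkeeping slip); the splittings you use for (\ref{cotaop2}) and (\ref{cotaop4}) are the standard ones. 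Part (i) is the same uniform-bound / weak-compactness / uniqueness / norm-convergence scheme as the paper, merely deriving (\ref{Lim1}) and (\ref{Lim2}) in the opposite order, which is equally valid. The one step where your justification is thinner than it needs to be is the identification of the weak limit of the discrete states: uniform boundedness and the $h$-uniform Lipschitz property of $(g,q)\mapsto u_{hgq}$ do \emph{not} by themselves imply $u_{h\overline{g}_h\overline{q}_h}-u_{hg^*q^*}\rightharpoonup 0$ in $V$, since the solution operator itself varies with $h$ (a uniformly bounded family of operators applied to a weakly null sequence need not produce a weakly null sequence). The claim is true, but it must be obtained by passing to the limit in the discrete variational equality (\ref{Variacuh}) — testing with $\pi_h(v)$ for $v$ in a dense subset of $V_0$, using $\pi_h(v)\to v$ strongly in $V$ — and invoking uniqueness of the continuous problem (\ref{Pvariacional}); this is precisely the device the paper uses to identify $\eta=u_{f\rho}$ and $\xi=p_{f\rho}$. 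With that step supplied, the rest of your argument (strong $H$-convergence via the compact embedding, weak lower semicontinuity, and the weak-plus-norm upgrade to strong convergence of the controls) is correct.
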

\begin{proof}
We follow a similar method to the one developed in \cite{Ta4, Ta5}. 

(i) From the definition of the functional (\ref{Jh}), we obtain, $\forall h>0$:
$$
\frac{1}{2}\left\|u_{h \overline{g}_{h}\overline{q}_{h}}-z_d\right\|_H^2+\frac{M_1}{2}\|\overline{g}_{h}\|_H^2+\frac{M_2}{2}\|\overline{q}_{h}\|_Q^2\leq \frac{1}{2}\left\|u_{h 00}-z_d\right\|_H^2\leq c
$$
where $u_{h00}$ is the unique solution of the variational equality (\ref{Variacuh}) for $g=0$ and $q=0$. That is,
$$
\left\|u_{h \overline{g}_{h}\overline{q}_{h}}\right\|_H\leq c\quad \|\overline{g}_{h}\|_H\leq c\quad \text{and}\quad \|\overline{q}_{h}\|_Q\leq c
$$
with $c$ different positive constants independent of $h$. Moreover, by using the variational equality (\ref{Variacuh}), we obtain
$$
\left\|u_{h \overline{g}_{h}\overline{q}_{h}}-b\right\|_V\leq \frac{1}{\lambda}(\|\overline{g}_{h}\|_H+\|\overline{q}_{h}\|_Q \|\gamma\|)\leq c
$$
then
$$
\left\|u_{h \overline{g}_{h}\overline{q}_{h}}\right\|_V\leq c.
$$
Next, by using the variational equality (\ref{Variacph}), we have
$$
\left\|p_{h \overline{g}_{h}\overline{q}_{h}}\right\|_V\leq \frac{1}{\lambda}\left\|u_{h \overline{g}_{h}\overline{q}_{h}}-z_d\right\|_H\leq c,\quad \forall h>0.
$$
Now, from the above estimations we obtain, when $h\rightarrow 0^+$:
\begin{equation*}
\exists f\in H\,:\,\, \overline{g}_{h} \rightarrow f \,\,\text{weakly in}\,\, H
\end{equation*}
\begin{equation*}
\exists \rho\in Q\,:\,\, \overline{q}_{h} \rightarrow \rho \,\,\text{weakly in}\,\, Q
\end{equation*}
\begin{equation*}
\exists \eta\in V\,:\,\, u_{h \overline{g}_{h}\overline{q}_{h}} \rightarrow \eta \,\,\text{weakly in}\,\, V (\text{in}\, H \,\text{strong})
\end{equation*}
\begin{equation*}
\exists \xi\in V\,:\,\, p_{h \overline{g}_{h}\overline{q}_{h}} \rightarrow \xi \,\,\text{weakly in}\,\, V (\text{in}\, H \,\text{strong}).
\end{equation*}
By using the above weak convergences, we can pas to the limit as $h\rightarrow 0^{+}$, and by uniqueness of the variational equalities (\ref{Pvariacional}) and (\ref{Padjvariacional}), we obtain that
$$
\eta=u_{f\rho},\quad \xi=p_{f\rho}.
$$
Next, by the weak lower semicontinuity of the functional $J_{h}$ and the uniqueness of the solution of the optimal control problem (\ref{OP}), we have that
$$
f=\overline{g}\quad \text{and}\quad \rho=\overline{q}.
$$
By the following inequalities 
\begin{equation*}
\begin{split}
\lambda\left\|u_{h \overline{g}_{h}\overline{q}_{h}}-u_{\overline{g}\,\overline{q}}\right\|_V^{2}&\leq (\overline{g}_{h}-\overline{g},u_{h \overline{g}_{h}\overline{q}_{h}}-b)_H-(\overline{q}_{h}-\overline{q},u_{h \overline{g}_{h}\overline{q}_{h}}-b)_Q\\&
+ (\overline{g},u_{\overline{g}\,\overline{q}}-u_{h \overline{g}_{h}\overline{q}_{h}})_H- (\overline{q},u_{\overline{g}\,\overline{q}}-u_{h \overline{g}_{h}\overline{q}_{h}})_Q
\end{split}
\end{equation*}
and
\begin{equation*}
\begin{split}
&\lambda\left\|p_{h \overline{g}_{h}\overline{q}_{h}}-p_{\overline{g}\,\overline{q}}\right\|_V^{2}\leq  a(p_{\overline{g}\,\overline{q}},p_{\overline{g}\,\overline{q}}-p_{h \overline{g}_{h}\overline{q}_{h}})_H- (p_{h \overline{g}_{h}\overline{q}_{h}},u_{\overline{g}\,\overline{q}}-u_{h \overline{g}_{h}\overline{q}_{h}})_Q
\end{split}
\end{equation*}
we obtain the strong convergences (\ref{Lim2}). Next, from the definition (\ref{Jh}), we have 
$$
\lim _{h \rightarrow 0^{+}}\left\|\overline{g}_{h}\right\|_H=\left\|\overline{g}\right\|_H\quad \text{and}\quad
\lim _{h \rightarrow 0^{+}}\left\|\overline{q}_{h}\right\|_Q=\left\|\overline{q}\right\|_Q
$$
and (\ref{Lim1}) holds. In a similar way, by using the elliptic variational equalities (\ref{Variacuhalfa}) and (\ref{Variacphalfa}), we prove (\ref{Lim3}) and (\ref{Lim4}). 

(ii) Following \cite{GaTa2}, we obtain that
$$
\left\|(\overline{g}_{h},\overline{q}_{h})-(\overline{g},\overline{q})\right\|_{H\times Q} \leq \frac{c_1\sqrt{\frac{1}{M_1^2}+\frac{||\gamma||^2}{M_2^2}}}{1-\frac{\sqrt{2}}{\lambda^2}\sqrt{\frac{1}{M_1^2}+\frac{||\gamma||^2}{M_2^2}}(1+||\gamma||)} h^{r-1}
$$
$$\left\|(\overline{g}_{h\alpha},\overline{q}_{h\alpha})-(\overline{g}_{\alpha},\overline{q}_{\alpha})\right\|_{H\times Q} \leq \frac{c_{1\alpha}\sqrt{\frac{1}{M_1^2}+\frac{||\gamma||^2}{M_2^2}}}{1-\frac{\sqrt{2}}{\lambda_{\alpha}^2}\sqrt{\frac{1}{M_1^2}+\frac{||\gamma||^2}{M_2^2}}(1+||\gamma||)} h^{r-1}
$$
where $c_1$ and $c_{1\alpha}$ are constants given in (\ref{Est1}) and (\ref{Est2}), respectively.
\end{proof}

\section{Convergence of the discrete optimal control pro\-blems ($P_{h\alpha}$) to ($P_{h}$)
 when $\alpha\rightarrow +\infty$}\label{Convergencealfa}

In this section, for each $h>0$, we obtain convergence results of the discrete simultaneous distributed-boundary optimal control problems $(P_{h\alpha})$ to $(P_h)$ when the parameter $\alpha \rightarrow +\infty$. For fixed $h>0$, we have the following convergences.
\begin{Lemma}
For fixed $(g,q) \in H\times Q$, $h>0$, we have the following limits:
\begin{equation}\label{limalfa1}
 \lim\limits_{\alpha \rightarrow+\infty}||u_{h\alpha gq}-u_{hgq}||_V=0.
\end{equation}
\begin{equation}\label{limalfa2}
\lim\limits_{\alpha \rightarrow+\infty}||p_{h \alpha gq}-p_{hgq}||_V=0.
\end{equation}
\end{Lemma}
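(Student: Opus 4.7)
The plan is to exploit the penalization structure of $a_\alpha$ on $\Gamma_1$. Since the constant function $b$ lies in $V_h$, it is convenient to subtract it: set $z_{h\alpha}:=u_{h\alpha gq}-b\in V_h$ and $w_h:=u_{hgq}-b\in V_{0h}$. Using that $a(b,\cdot)=0$ and $\gamma(b)=b$ on $\Gamma_1$, the variational equality (\ref{Variacuhalfa}) reduces to
\begin{equation*}
a_\alpha(z_{h\alpha},v_h)=(g,v_h)_H-(q,v_h)_Q,\qquad \forall\,v_h\in V_h,
\end{equation*}
while (\ref{Variacuh}) becomes $a(w_h,v_h)=(g,v_h)_H-(q,v_h)_Q$ for every $v_h\in V_{0h}$. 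The goal is then to prove $z_{h\alpha}\to w_h$ in $V$ as $\alpha\to +\infty$, which is equivalent to (\ref{limalfa1}).

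First I would extract uniform a priori bounds in $\alpha$. Testing with $v_h=z_{h\alpha}$ and using the coercivity (\ref{coercivealfa}) restricted to $\alpha\ge 1$, so that $\lambda_\alpha=\lambda_1$, together with the continuity estimate $|(q,\gamma(z_{h\alpha}))_Q|\le\|q\|_Q\,\|\gamma\|\,\|z_{h\alpha}\|_V$, yields a constant $C$ independent of $\alpha$ such that
\begin{equation*}
\|z_{h\alpha}\|_V\le C,\qquad \alpha\,\|\gamma(z_{h\alpha})\|_{L^2(\Gamma_1)}^2\le C.
\end{equation*}
In particular, $\|\gamma(z_{h\alpha})\|_{L^2(\Gamma_1)}\to 0$ as $\alpha\to+\infty$.

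Since $h>0$ is fixed, $V_h$ is finite dimensional, so any $V$-bounded family in $V_h$ has a subsequence converging strongly in $V$ to some $z^*\in V_h$. The boundary estimate forces $\gamma(z^*)=0$ on $\Gamma_1$, hence $z^*\in V_{0h}$. Passing to the limit in the variational equality with test functions restricted to $v_h\in V_{0h}\subset V_h$, for which the Robin term $\alpha\int_{\Gamma_1}\gamma(z_{h\alpha})\gamma(v_h)\,d\gamma$ identically vanishes, yields $a(z^*,v_h)=(g,v_h)_H-(q,v_h)_Q$ for every $v_h\in V_{0h}$, so $z^*=w_h$ by uniqueness. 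Because the limit is independent of the subsequence chosen, the entire family converges, proving (\ref{limalfa1}).

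The proof of (\ref{limalfa2}) follows the same pattern applied directly to $p_{h\alpha gq}\in V_h$; no lift is required because the natural limiting Dirichlet trace on $\Gamma_1$ is zero. Testing (\ref{Variacphalfa}) with $v_h=p_{h\alpha gq}$ and invoking (\ref{coercivealfa}) with $\alpha\ge 1$ gives, using the $H$-bound on $u_{h\alpha gq}$ already obtained, the uniform estimates $\|p_{h\alpha gq}\|_V\le C$ and $\alpha\,\|\gamma(p_{h\alpha gq})\|_{L^2(\Gamma_1)}^2\le C$. Extracting a strongly convergent subsequence $p_{h\alpha_k gq}\to p^*\in V_{0h}$, passing to the limit in (\ref{Variacphalfa}) with $v_h\in V_{0h}$, and using the already proved convergence $u_{h\alpha gq}\to u_{hgq}$ in $H$, identifies $p^*=p_{hgq}$ by uniqueness of (\ref{Variacph}). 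The main technical point I expect to be delicate is the correct handling of the constant lift by $b$ and the uniform-in-$\alpha$ coercivity, which only holds for $\alpha\ge 1$; once these are in place, the compactness step is made trivial by $\dim V_h<\infty$.
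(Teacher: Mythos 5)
Your proof is correct, and its skeleton matches the paper's: uniform-in-$\alpha$ a priori bounds obtained from the coercivity of $a_\alpha$ for $\alpha\ge 1$, a bound of the form $\alpha\int_{\Gamma_1}|\gamma(u_{h\alpha gq})-b|^2\,d\gamma\le C$ forcing the limit to take the trace $b$ on $\Gamma_1$, compactness, and identification of the limit via uniqueness of (\ref{Variacuh}) and (\ref{Variacph}). Where you genuinely diverge is in the compactness step: the paper extracts only a \emph{weakly} convergent subsequence in $V$ (strongly in $H$) and then needs an additional argument, via the variational equalities, to upgrade to strong convergence in $V$ --- the same two-step template used in the continuous setting of \cite{GaTa2}. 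You instead observe that $V_h$ is finite dimensional for fixed $h>0$, so the bounded family $\{u_{h\alpha gq}\}_{\alpha\ge 1}\subset V_h$ already has norm-convergent subsequences and the final strong-convergence step disappears; this is a legitimate and strictly shorter route, available only because the lemma is posed at fixed $h$. Your explicit lift by the constant $b$ (which indeed lies in $V_h$ and satisfies $a(b,\cdot)=0$) is also a clean way to package the computation the paper performs with the splitting $a_\alpha=a_1+(\alpha-1)\int_{\Gamma_1}uv\,d\gamma$; the two are equivalent. The only point to make explicit if you write this up is the subsequence-uniqueness argument you invoke at the end (every subsequence admits a further subsequence converging to $w_h$, hence the whole family converges), which you state correctly but should apply to both (\ref{limalfa1}) and (\ref{limalfa2}).
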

\begin{proof}
For fixed $(g,q) \in H\times Q$, $h>0$, and by using the variational equalities (\ref{Variacuh}) and (\ref{Variacuhalfa}), and taking into account that for $\alpha >1$ we can split
$$
a_{\alpha}(u,v)=a_1(u,v)+(\alpha -1)\int_{\Gamma_1} u v d\gamma
$$
we obtain the following estimations
$$
||u_{h\alpha gq}-u_{hgq}||_V \leq c, \quad(\alpha-1) \int_{\Gamma_1}\left(u_{h\alpha gq}-b\right)^2 d\gamma \leq c, \quad \forall \alpha>1.
$$
From the above inequalities, we deduce that
$$
\exists \eta_{hgq} \in V / u_{h\alpha gq} \longrightarrow \eta_{hgq} \text { in } V \text { weakly (in } H \text { strong) as } \alpha \rightarrow+\infty \text { with } \eta_{hgq}\big|_{\Gamma_1}=b. 
$$
By using the variational equality (\ref{Variacuhalfa}), we can pass to the limit when $\alpha \rightarrow+\infty$, and by uniqueness of the variational equality (\ref{Variacuh}) we obtain that $\eta_{hgq}=u_{hgq}$. By using the above properties, and the variational equalities (\ref{Variacuh}) and (\ref{Variacuhalfa}), we deduce (\ref{limalfa1}) and by using a similar method we can obtain the limit (\ref{limalfa2}) for the discrete adjoint system state.
\end{proof}
\begin{Theorem}
We have the following limits:
\begin{equation}\label{limalfaop1}
\lim\limits_{\alpha \rightarrow+\infty}||(\overline{g}_{h\alpha},\overline{q}_{h\alpha})-(\overline{g}_{h},\overline{q}_{h})||_{H\times Q}=0.
\end{equation}
\begin{equation}\label{limalfaop2}
\lim\limits_{\alpha \rightarrow+\infty}||u_{h\alpha \overline{g}_{h\alpha}\overline{q}_{h\alpha}}-u_{h\overline{g}_{h}\overline{q}_{h}}||_V=0, \quad \lim\limits_{\alpha \rightarrow+\infty}||p_{h\alpha \overline{g}_{h\alpha}\overline{q}_{h\alpha}}-p_{h\overline{g}_{h}\overline{q}_{h}}||_V=0.
\end{equation}
\end{Theorem}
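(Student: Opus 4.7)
The plan is to argue by weak compactness and minimality of $(\overline{g}_{h\alpha},\overline{q}_{h\alpha})$, using (\ref{limalfa1})--(\ref{limalfa2}) of the preceding lemma as the key technical ingredient to pass to the limit inside the state and adjoint equations. First I would establish uniform-in-$\alpha$ bounds: the minimality $J_{h\alpha}(\overline{g}_{h\alpha},\overline{q}_{h\alpha})\le J_{h\alpha}(\overline{g}_h,\overline{q}_h)$, combined with (\ref{limalfa1}) applied at the fixed control $(\overline{g}_h,\overline{q}_h)$, makes the right-hand side bounded uniformly in $\alpha\ge 1$, hence $\|\overline{g}_{h\alpha}\|_H,\,\|\overline{q}_{h\alpha}\|_Q\le c$ and $\|u_{h\alpha\overline{g}_{h\alpha}\overline{q}_{h\alpha}}-z_d\|_H\le c$. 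Testing (\ref{Variacuhalfa}) against $v_h=u_{h\alpha\overline{g}_{h\alpha}\overline{q}_{h\alpha}}-b$ (the constant $b$ belongs to $V_h$) then yields both a $V$-bound on the state and the critical trace-penalty estimate $\alpha\int_{\Gamma_1}(u_{h\alpha\overline{g}_{h\alpha}\overline{q}_{h\alpha}}-b)^2\,d\gamma\le c$, and a $V$-bound on the adjoint follows from (\ref{Variacphalfa}).

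Next I would extract weak subsequential limits $(\tilde g,\tilde q)\in H\times Q$; since the states and adjoints live in the finite-dimensional space $V_h$, their $V$-bounds give strong subsequential limits $\eta_h,\xi_h\in V_h$. The trace-penalty estimate forces $\eta_h|_{\Gamma_1}=b$, hence $\eta_h\in K_h$. Restricting (\ref{Variacuhalfa}) to $v_h\in V_{0h}$ annihilates the boundary penalty term and lets me pass to the limit; uniqueness of (\ref{Variacuh}) identifies $\eta_h=u_{h\tilde g\tilde q}$, and an analogous argument with (\ref{Variacphalfa}) and (\ref{Variacph}) gives $\xi_h=p_{h\tilde g\tilde q}$. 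To conclude $(\tilde g,\tilde q)=(\overline{g}_h,\overline{q}_h)$, I invoke the standard lsc-and-minimality chain
\[
J_h(\tilde g,\tilde q)\le\liminf_{\alpha\to+\infty}J_{h\alpha}(\overline{g}_{h\alpha},\overline{q}_{h\alpha})\le\limsup_{\alpha\to+\infty}J_{h\alpha}(\overline{g}_h,\overline{q}_h)=J_h(\overline{g}_h,\overline{q}_h),
\]
together with uniqueness of the minimizer of $J_h$ from the preceding theorem; uniqueness of the limit then removes the subsequence and the whole net converges.

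To upgrade weak to strong in (\ref{limalfaop1}), the chain above is actually a string of equalities, so since $\|u_{h\alpha\overline{g}_{h\alpha}\overline{q}_{h\alpha}}-z_d\|_H^2$ already converges, $M_1\|\overline{g}_{h\alpha}\|_H^2+M_2\|\overline{q}_{h\alpha}\|_Q^2\to M_1\|\overline{g}_h\|_H^2+M_2\|\overline{q}_h\|_Q^2$; weak lower semicontinuity of each component norm separately then forces componentwise norm convergence, and weak plus norm convergence in a Hilbert space gives strong convergence. For (\ref{limalfaop2}) I would split
\[
u_{h\alpha\overline{g}_{h\alpha}\overline{q}_{h\alpha}}-u_{h\overline{g}_h\overline{q}_h}=(u_{h\alpha\overline{g}_{h\alpha}\overline{q}_{h\alpha}}-u_{h\alpha\overline{g}_h\overline{q}_h})+(u_{h\alpha\overline{g}_h\overline{q}_h}-u_{h\overline{g}_h\overline{q}_h}),
\]
bound the first summand by the Lipschitz estimate from the earlier lemma, whose constant $(1+\|\gamma\|)\sqrt{2}/\lambda_\alpha=(1+\|\gamma\|)\sqrt{2}/\lambda_1$ is uniform for $\alpha\ge 1$, and the second by (\ref{limalfa1}); the adjoint part is analogous using the adjoint Lipschitz estimate and (\ref{limalfa2}). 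The main obstacle is the identification of the weak limit in the second paragraph: one must simultaneously certify that the limit state belongs to $K_h$ (not merely $V_h$) and pass to the limit in the variational equality. The trace-penalty estimate handles the boundary condition, while restricting test functions to $V_{0h}$ makes the $\alpha$-penalized boundary term silently vanish and unlocks the limit passage; once that is secured, the rest is the by-now routine lsc-plus-norm-convergence template.
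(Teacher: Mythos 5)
Your proposal is correct and follows essentially the same route as the paper, which for this theorem simply defers to Theorem 7 of \cite{GaTa2} (the continuous analogue) and to the uniform-bound/weak-compactness/identification scheme it spells out elsewhere (Section 3 and Step 1 of Section 5): minimality against a fixed competitor plus the penalty estimate $(\alpha-1)\int_{\Gamma_1}(u_{h\alpha}-b)^2\,d\gamma\le c$ to force the limit into $K_h$, restriction of test functions to $V_{0h}$ to pass to the limit, uniqueness of the minimizer of $J_h$, and the norm-convergence upgrade. Your observations that $V_h$ is finite-dimensional (so weak $V$-limits of the states are automatically strong) and that $\lambda_\alpha=\lambda_1$ for $\alpha\ge 1$ makes the Lipschitz constants uniform are sound and, if anything, supply more detail than the paper does.
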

\begin{proof}
For each fixed $h>0$, the thesis holds in similar way that Theorem 7 in \cite{GaTa2}.
\end{proof}

\section{Double convergence of the discrete distributed-boundary optimal control problems ($P_{h\alpha}$) to ($P$)
 when $(h,\alpha)\rightarrow (0^+,+\infty)$}\label{Convergencehalfa}

In this section, we prove the main result of the paper.

\begin{Theorem}
We have the following limits:
\begin{equation}\label{limhalfaop1}
\lim\limits_{(h,\alpha) \rightarrow (0^+,+\infty)}||(\overline{g}_{h\alpha},\overline{q}_{h\alpha})-(\overline{g},\overline{q})||_{H\times Q}=0.
\end{equation}
\begin{equation}\label{limhalfaop2}
\lim\limits_{(h,\alpha) \rightarrow (0^+,+\infty)}||u_{h\alpha \overline{g}_{h\alpha}\overline{q}_{h\alpha}}-u_{\overline{g}\,\overline{q}}||_V=0, \quad \lim\limits_{(h,\alpha) \rightarrow (0^+,+\infty)}||p_{h\alpha \overline{g}_{h\alpha}\overline{q}_{h\alpha}}-p_{\overline{g}\,\overline{q}}||_V=0.
\end{equation}
\end{Theorem}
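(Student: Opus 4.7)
The plan is to mimic the weak-compactness proof used in Section 3, but working along the two-parameter sequence $(h,\alpha)\to (0^+,+\infty)$. The first step is to establish a priori bounds uniform in $h$ and $\alpha\geq 1$. Since $b$ is a positive constant we have $b\in V_h$, and a direct inspection of (\ref{Variacuhalfa}) shows $u_{h\alpha 00}=b$, so the optimality of $(\overline g_{h\alpha},\overline q_{h\alpha})$ gives $J_{h\alpha}(\overline g_{h\alpha},\overline q_{h\alpha})\leq J_{h\alpha}(0,0)=\tfrac12\|b-z_d\|_H^2$, yielding uniform bounds on $\|\overline g_{h\alpha}\|_H$, $\|\overline q_{h\alpha}\|_Q$ and $\|u_{h\alpha\overline g_{h\alpha}\overline q_{h\alpha}}-z_d\|_H$. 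Testing (\ref{Variacuhalfa}) with $v_h=u_{h\alpha\overline g_{h\alpha}\overline q_{h\alpha}}-b\in V_h$ and exploiting that $\lambda_\alpha=\lambda_1$ for $\alpha\geq 1$, I obtain simultaneously $\|u_{h\alpha\overline g_{h\alpha}\overline q_{h\alpha}}\|_V\leq C$ and $\alpha\|u_{h\alpha\overline g_{h\alpha}\overline q_{h\alpha}}-b\|_{L^2(\Gamma_1)}^2\leq C$. The analogous manipulation of (\ref{Variacphalfa}) produces $\|p_{h\alpha\overline g_{h\alpha}\overline q_{h\alpha}}\|_V\leq C$ together with $\alpha\|p_{h\alpha\overline g_{h\alpha}\overline q_{h\alpha}}\|_{L^2(\Gamma_1)}^2\leq C$.

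Along a (not relabeled) subsequence I then extract weak limits
$$\overline g_{h\alpha}\rightharpoonup\tilde g\ \text{in}\ H,\quad \overline q_{h\alpha}\rightharpoonup\tilde q\ \text{in}\ Q,\quad u_{h\alpha\overline g_{h\alpha}\overline q_{h\alpha}}\rightharpoonup\tilde u,\quad p_{h\alpha\overline g_{h\alpha}\overline q_{h\alpha}}\rightharpoonup\tilde p$$
weakly in $V$ and strongly in $H$. The boundary bounds above combined with trace continuity force $\tilde u\in K$ and $\tilde p\in V_0$. To identify the limiting states, I take any $v\in V_0\cap C^0(\bar\Omega)$ and test (\ref{Variacuhalfa})--(\ref{Variacphalfa}) with $v_h=\pi_h(v)\in V_{0h}\subset V_h$: the critical boundary integral $\alpha\int_{\Gamma_1}b\,\pi_h(v)\,d\gamma$ vanishes identically because $\pi_h(v)|_{\Gamma_1}=0$, while $\pi_h(v)\to v$ strongly in $V$ by (\ref{Estim2}). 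Passing to the limit and using density of $V_0\cap C^0(\bar\Omega)$ in $V_0$ yields $\tilde u=u_{\tilde g\tilde q}$ and $\tilde p=p_{\tilde g\tilde q}$.

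To obtain $(\tilde g,\tilde q)=(\overline g,\overline q)$, I compare $J_{h\alpha}(\overline g_{h\alpha},\overline q_{h\alpha})$ with $J_{h\alpha}(\overline g,\overline q)$. For the latter I first need the joint convergence $u_{h\alpha\overline g\overline q}\to u_{\overline g\overline q}$ in $H$, which I would establish by the same weak-compactness scheme applied to a single fixed control: uniform $V$-bound, trace recovery forced by the $\alpha$-penalty, identification by uniqueness in (\ref{Pvariacional}), and strong convergence via an energy identity analogous to the one used in Section 3. Combined with weak lower semicontinuity of $J$ and uniqueness of the continuous minimizer this yields
\begin{multline*}
J(\tilde g,\tilde q)\leq\liminf J_{h\alpha}(\overline g_{h\alpha},\overline q_{h\alpha})\leq\limsup J_{h\alpha}(\overline g_{h\alpha},\overline q_{h\alpha})\\
\leq\lim J_{h\alpha}(\overline g,\overline q)=J(\overline g,\overline q)=J(\tilde g,\tilde q),
\end{multline*}
which forces $(\tilde g,\tilde q)=(\overline g,\overline q)$ and, since the chain collapses to equalities, also $\|\overline g_{h\alpha}\|_H\to\|\overline g\|_H$ and $\|\overline q_{h\alpha}\|_Q\to\|\overline q\|_Q$, upgrading the weak convergences to the strong convergence (\ref{limhalfaop1}) in the Hilbert space $H\times Q$. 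The strong $V$-limits (\ref{limhalfaop2}) then follow from the energy identity satisfied by $u_{h\alpha\overline g_{h\alpha}\overline q_{h\alpha}}-b$ combined with the trace bound already obtained, and uniqueness of the limit implies convergence of the full family rather than merely of a subsequence.

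The main obstacle is the coupling of the two limits through the singular boundary term $\alpha\int_{\Gamma_1}(\cdot)^2\,d\gamma$. The one-parameter estimates of Sections 3 and 4 cannot be simply iterated, because the constants $c_\alpha$, $c_{0\alpha}$, $c_{1\alpha}$ depend implicitly on $\alpha$ through $\|u_{\alpha gq}\|_{H^r}$ and need not be controlled along the diagonal. The double limit must therefore be produced by a genuine joint weak-compactness argument, which succeeds precisely because the interpolant of a function vanishing on $\Gamma_1$ is exactly zero on $\Gamma_1$; this annihilates the $\alpha$-scaled penalty at every step while still approximating $v$ strongly in $V$. The most delicate technical point will be verifying the joint convergence $u_{h\alpha\overline g\overline q}\to u_{\overline g\overline q}$ with no $\alpha$-dependent degradation, which feeds the comparison with the value $J_{h\alpha}(\overline g,\overline q)$ in the optimality step.
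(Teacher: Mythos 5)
Your proposal is correct and shares the paper's overall architecture --- uniform a priori bounds in $(h,\alpha)$, weak compactness, identification of the limit by uniqueness, and upgrade from weak to strong convergence via convergence of norms --- but it completes the crucial identification step by a genuinely different, more self-contained route. The paper's Step 2 identifies the diagonal limit by routing through the two edges of the commutative diagram: it reuses the already-proved convergences $(P_{h\alpha})\to(P_h)$ for fixed $h$ and $(P_h)\to(P)$, i.e.\ the limits (\ref{Lim1})--(\ref{Lim2}) and the results of \cite{GaTa2}, together with the uniform estimates of Step 1, leaving the passage from iterated to double limits largely implicit. You instead work directly on the diagonal: you annihilate the $\alpha$-scaled penalty by testing (\ref{Variacuhalfa})--(\ref{Variacphalfa}) with interpolants $\pi_h(v)$ of functions $v$ vanishing on $\Gamma_1$ (valid because $\Gamma_1$ is a union of element faces, so $\pi_h(v)|_{\Gamma_1}=0$ exactly), recover the Dirichlet condition on the limit from the bound $(\alpha-1)\int_{\Gamma_1}(\,\cdot\,)^2\,d\gamma\le C$, and then pin down $(\tilde g,\tilde q)=(\overline{g},\overline{q})$ by the optimality comparison with $J_{h\alpha}(\overline{g},\overline{q})$. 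The price of your route is exactly the point you flag: you must separately establish the joint convergence $u_{h\alpha\overline{g}\,\overline{q}}\to u_{\overline{g}\,\overline{q}}$ in $H$, since the one-parameter constants $c_{0\alpha}$, $c_{1\alpha}$ of (\ref{Est2}) are not controlled along the diagonal; your weak-compactness scheme for a fixed control does deliver this. The benefit is independence from Sections 3 and 4 and an explicit justification of the uniformity underlying the double limit. Two minor points: your observation that $u_{h\alpha 00}=b$, whence $J_{h\alpha}(0,0)=\tfrac12\|b-z_d\|_H^2$, is correct and cleaner than the paper's chain of constants $c_1,\dots,c_{12}$; and you should invoke explicitly the density of, say, $V_0\cap H^2(\Omega)$ in $V_0$ so that (\ref{Estim2}) applies to your test functions.
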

\begin{proof}
We obtain the proof in two steps.

\textbf{Step 1.} We show a sketch of the proof by obtaining the following estimations, for $h>0$ and $\alpha >1$:
\begin{equation*}
||u_{h00}||_V\leq c_1=b\sqrt{|\Omega|}
\end{equation*}
\begin{equation*}
||u_{h\alpha 00}||_V\leq c_2=\left(1+\frac{1}{\lambda_1}\right)c_1
\end{equation*}
\begin{equation*}
(\alpha - 1)\int_{\Gamma_1}(u_{h\alpha 00}-b)^2 d\gamma\leq c_3=\frac{c_1^2}{\lambda_1}
\end{equation*}
\begin{equation*}
||(\overline{g}_{h\alpha}, \overline{q}_{h\alpha})||_{H\times Q}\leq c_4=\frac{1}{\sqrt{\min\{M_1,M_2\}}}(c_2+||z_d||_H)
\end{equation*}
\begin{equation*}
||u_{h\alpha \overline{g}_{h\alpha}\overline{q}_{h\alpha}}||_H\leq c_5=c_2+2||z_d||_H
\end{equation*}
\begin{equation*}
||(\overline{g}_{h}, \overline{q}_{h})||_{H\times Q}\leq c_6=\frac{1}{\sqrt{\min\{M_1,M_2\}}}(c_1+||z_d||_H)
\end{equation*}
\begin{equation*}
||u_{h \overline{g}_{h}\overline{q}_{h}}||_V\leq c_7=\sqrt{2}(1+||\gamma||)c_6+c_1
\end{equation*}
\begin{equation*}
||u_{h\alpha \overline{g}_{h\alpha}\overline{q}_{h\alpha}}||_V\leq c_8=\sqrt{2}(1+||\gamma||)c_4+\left(1+\frac{1}{\lambda_1}\right)c_7
\end{equation*}
\begin{equation*}
(\alpha - 1)\int_{\Gamma_1}(u_{h\alpha \overline{g}_{h\alpha}\overline{q}_{h\alpha}}-b)^2 d\gamma\leq c_9=\frac{1}{\lambda_1}(\sqrt{2}(1+||\gamma||)c_4+c_7)^2
\end{equation*}
\begin{equation*}
||p_{h \overline{g}_{h}\overline{q}_{h}}||_V\leq c_{10}=\frac{1}{\lambda}(c_7+||z_d||_H)
\end{equation*}
\begin{equation*}
||p_{h\alpha \overline{g}_{h\alpha}\overline{q}_{h\alpha}}||_V\leq c_{11}=\frac{1}{\lambda_1}(c_8+||z_d||_H)+ \left(1+\frac{1}{\lambda_1}\right)c_{10}
\end{equation*}
\begin{equation*}
(\alpha - 1)\int_{\Gamma_1}(p_{h\alpha \overline{g}_{h\alpha}\overline{q}_{h\alpha}}-b)^2 d\gamma\leq c_{12}=\frac{1}{\lambda_1}(c_5+||z_d||_H+c_{10})^2.
\end{equation*}
Therefore, from the above estimations we have that
\begin{equation*}
\exists f\in H\,:\,\, \overline{g}_{h\alpha} \rightarrow f \,\,\text{weakly in}\,\, H,\,\, \text{when} \,\, (h,\alpha)\rightarrow(0^+,+\infty)
\end{equation*}
\begin{equation*}
\exists \rho\in Q\,:\,\, \overline{q}_{h\alpha} \rightarrow \rho \,\,\text{weakly in}\,\, Q,\,\, \text{when} \,\, (h,\alpha)\rightarrow(0^+,+\infty)
\end{equation*}
\begin{equation*}
\exists \eta\in V\,:\,\, u_{h\alpha \overline{g}_{h\alpha}\overline{q}_{h\alpha}} \rightarrow \eta \,\,\text{weakly in}\,\, V (\text{in}\, H \,\text{strong}),\,\, \text{when} \,\, (h,\alpha)\rightarrow(0^+,+\infty)
\end{equation*}
\begin{equation*}
\exists \xi\in V\,:\,\, p_{h\alpha \overline{g}_{h\alpha}\overline{q}_{h\alpha}} \rightarrow \xi \,\,\text{weakly in}\,\, V (\text{in}\, H \,\text{strong}),\,\, \text{when} \,\, (h,\alpha)\rightarrow(0^+,+\infty)
\end{equation*}
and 
\begin{equation*}
\exists f_h\in H\,:\,\, \overline{g}_{h\alpha} \rightarrow f_h \,\,\text{weakly in}\,\, H,\,\, \text{when} \,\, \alpha\rightarrow +\infty
\end{equation*}
\begin{equation*}
\exists \rho_h\in Q\,:\,\, \overline{q}_{h\alpha} \rightarrow \rho_h \,\,\text{weakly in}\,\, Q,\,\, \text{when} \,\, \alpha\rightarrow +\infty
\end{equation*}
\begin{equation*}
\exists \eta_h\in V\,:\,\, u_{h\alpha \overline{g}_{h\alpha}\overline{q}_{h\alpha}} \rightarrow \eta_h \,\,\text{weakly in}\,\, V (\text{in}\, H \,\text{strong}),\,\, \text{when} \,\, \alpha\rightarrow +\infty
\end{equation*}
\begin{equation*}
\exists \xi_h\in V\,:\,\, p_{h\alpha \overline{g}_{h\alpha}\overline{q}_{h\alpha}} \rightarrow \xi_h \,\,\text{weakly in}\,\, V (\text{in} \,H \,\text{strong}),\,\, \text{when} \,\, \alpha\rightarrow +\infty
\end{equation*}
and
\begin{equation*}
\exists f_{\alpha}\in H\,:\,\, \overline{g}_{h\alpha} \rightarrow f_{\alpha} \,\,\text{weakly in}\,\, H,\,\, \text{when} \,\, h\rightarrow 0^+
\end{equation*}
\begin{equation*}
\exists \rho_{\alpha}\in Q\,:\,\, \overline{q}_{h\alpha} \rightarrow \rho_{\alpha} \,\,\text{weakly in}\,\, Q,\,\, \text{when} \,\, h\rightarrow 0^+
\end{equation*}
\begin{equation*}
\exists \eta_{\alpha}\in V\,:\,\, u_{h\alpha \overline{g}_{h\alpha}\overline{q}_{h\alpha}} \rightarrow \eta_{\alpha} \,\,\text{weakly in}\,\, V (\text{in} \,H \,\text{strong}),\,\, \text{when} \,\, h\rightarrow 0^+
\end{equation*}
\begin{equation*}
\exists \xi_{\alpha}\in V\,:\,\, p_{h\alpha \overline{g}_{h\alpha}\overline{q}_{h\alpha}} \rightarrow \xi_{\alpha} \,\,\text{weakly in}\,\, V (\text{in} \,H \,\text{strong}),\,\, \text{when} \,\, h\rightarrow 0^+.
\end{equation*}
\textbf{Step 2.}
Now, taking into account that 
$$\eta=\eta_h=b\,\,\text{on}\,\,\Gamma_1,$$
$$\xi=\xi_h=0\,\,\text{on}\,\,\Gamma_1,$$
by the uniqueness of the solutions of the simultaneous distributed-boundary optimal control problems $(P_{h\alpha})$, $(P_{h})$, $(P_{\alpha})$ and $(P)$, and the uniqueness of the solutions of the elliptic variational equalities corresponding to their state systems, we obtain that
$$
\eta_h=u_{h f_{h} \rho_h}=u_{h \overline{g}_h \overline{q}_h},\quad \xi_h=p_{h f_{h} \rho_h}=p_{h \overline{g}_h \overline{q}_h},\quad f_h=\overline{g}_h,\quad \rho_h=\overline{q}_h
$$
$$
\eta_{\alpha}=u_{\alpha f_{\alpha} \rho_{\alpha}}=u_{\alpha \overline{g}_{\alpha} \overline{q}_{\alpha}},\quad \xi_{\alpha}=p_{\alpha f_{\alpha} \rho_{\alpha}}=p_{\alpha \overline{g}_{\alpha} \overline{q}_{\alpha}},\quad f_{\alpha}=\overline{g}_{\alpha},\quad \rho_{\alpha}=\overline{q}_{\alpha}
$$
and the limits (\ref{Lim1}) and (\ref{Lim2}).
Next, by using \cite{GaTa2}, we get
$$
\lim_{\alpha \rightarrow +\infty}||f_{\alpha}-\overline{g}||_H=0,\quad \lim_{\alpha \rightarrow +\infty}||\rho_{\alpha}-\overline{q}||_Q=0
$$
$$
\lim_{\alpha \rightarrow +\infty}||\eta_{\alpha}-u_{\overline{g}\,\overline{q}}||_V=0,\quad \lim_{\alpha \rightarrow +\infty}||\xi_{\alpha}-p_{\overline{g}\,\overline{q}}||_V=0
$$
and therefore the double limits (\ref{limhalfaop1}) and (\ref{limhalfaop2}) holds, when $(h,\alpha)\rightarrow (0^+,+\infty)$.
\end{proof}

\section{Relationship among the optimal values corresponding to the optimal control problems  $(P)$, $(P_{h})$, $(P_{\alpha})$ and $(P_{h\alpha})$}

In this section, we obtain the estimates given below.
\begin{Lemma}
If $M_1$ and $M_2$ satisfy the inequalities (\ref{puntofijo}) and the continuous system states and adjoint states have the regularity $u_{\overline{g}\,\overline{q}}, u_{\alpha \overline{g}_{\alpha}\overline{q}_{\alpha}}, p_{\overline{g}\,\overline{q}}, p_{\alpha \overline{g}_{\alpha}\overline{q}_{\alpha}} \in H^r(\Omega)$ $(1<r \leq 2)$, we have the following error bonds:
\begin{equation}\label{cota1}
0\leq J(\overline{g}_h,\overline{q}_h)-J(\overline{g},\overline{q}) \leq c h^{2(r-1)}
\end{equation}
\begin{equation}\label{cota2}
0\leq J_h(\overline{g},\overline{q})-J_h(\overline{g}_h,\overline{q}_h) \leq c h^{2(r-1)}
\end{equation}
\begin{equation}\label{cota3}
J(\overline{g},\overline{q})-J_h(\overline{g}_h,\overline{q}_h) \leq c h^{r-1}
\end{equation}
and, for each $\alpha>0$:
\begin{equation}\label{cota4}
0\leq J_{\alpha}(\overline{g}_{h\alpha},\overline{q}_{h\alpha})-J_{\alpha}(\overline{g}_{\alpha},\overline{q}_{\alpha}) \leq c_{\alpha} h^{2(r-1)}
\end{equation}
\begin{equation}\label{cota5}
0\leq J_{h\alpha}(\overline{g}_{\alpha},\overline{q}_{\alpha})-J_{h\alpha}(\overline{g}_{h\alpha},\overline{q}_{h\alpha}) \leq c_{\alpha} h^{2(r-1)}
\end{equation}
\begin{equation}\label{cota6}
J_{\alpha}(\overline{g}_{\alpha},\overline{q}_{\alpha}) - J_{h\alpha}(\overline{g}_{h\alpha},\overline{q}_{h\alpha}) \leq c_{\alpha} h^{r-1}
\end{equation}
where $c$ and $c_{\alpha}$ are different constants independents of $h$.
\end{Lemma}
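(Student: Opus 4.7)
The plan is to handle the six estimates in three groups: the lower bounds in (\ref{cota1}), (\ref{cota2}), (\ref{cota4}), (\ref{cota5}) which are immediate from optimality; the matching upper bounds, which exploit the quadratic structure of the cost functionals together with the $H^{r-1}$ convergence of the controls already proved; and the mixed bounds (\ref{cota3}) and (\ref{cota6}), which are obtained by inserting a bridging term and using the state error estimates of Lemma \ref{lem1}.

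First, since $(\overline{g},\overline{q})$ minimizes $J$ on $H\times Q$ and $(\overline{g}_h,\overline{q}_h)$ minimizes $J_h$, we get $J(\overline{g}_h,\overline{q}_h)\ge J(\overline{g},\overline{q})$ and $J_h(\overline{g},\overline{q})\ge J_h(\overline{g}_h,\overline{q}_h)$; likewise for the $\alpha$ versions. This yields the left inequalities.

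For the upper bounds in (\ref{cota1}) and (\ref{cota2}), I would use the fact that $J$ is quadratic, with linearization
\[
J(g_2,q_2)-J(g_1,q_1)=J'(g_1,q_1)(g_2-g_1,q_2-q_1)+\tfrac{1}{2}\|u_{g_2q_2}-u_{g_1q_1}\|_H^2+\tfrac{M_1}{2}\|g_2-g_1\|_H^2+\tfrac{M_2}{2}\|q_2-q_1\|_Q^2,
\]
and the analogous identity for $J_h$ (this is the same identity that underlies Theorem on the strict convexity of $J_h$, applied with $t\to 0$, or derived directly from the adjoint state relation). Taking $(g_1,q_1)=(\overline{g},\overline{q})$ and $(g_2,q_2)=(\overline{g}_h,\overline{q}_h)$, the linear term vanishes by the optimality condition $J'(\overline{g},\overline{q})=0$, leaving a sum of squared norms. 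Using the Lipschitz dependence of $u_{gq}$ on $(g,q)$ (Lemma, item (ii) for the continuous problem, derived just like its discrete analogue) and the bound (\ref{cotaop1}), this sum is $O(\|(\overline{g}_h,\overline{q}_h)-(\overline{g},\overline{q})\|_{H\times Q}^2)=O(h^{2(r-1)})$, giving (\ref{cota1}). Swapping the roles of $J$ and $J_h$, using $J_h'(\overline{g}_h,\overline{q}_h)=0$ and (\ref{cotaop1}), yields (\ref{cota2}). The same argument with $a_{\alpha}$, $J_\alpha$, $J_{h\alpha}$, and the bound (\ref{cotaop3}) gives (\ref{cota4}) and (\ref{cota5}).

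For the mixed bound (\ref{cota3}), the plan is to write
\[
J(\overline{g},\overline{q})-J_h(\overline{g}_h,\overline{q}_h)=\bigl[J(\overline{g},\overline{q})-J_h(\overline{g},\overline{q})\bigr]+\bigl[J_h(\overline{g},\overline{q})-J_h(\overline{g}_h,\overline{q}_h)\bigr].
\]
The second bracket is $O(h^{2(r-1)})$ by (\ref{cota2}). For the first, the only difference between $J$ and $J_h$ at the same control is $\tfrac{1}{2}(\|u_{\overline{g}\overline{q}}-z_d\|_H^2-\|u_{h\overline{g}\overline{q}}-z_d\|_H^2)$, which factors as
\[
\tfrac{1}{2}\bigl(u_{\overline{g}\overline{q}}-u_{h\overline{g}\overline{q}},\;u_{\overline{g}\overline{q}}+u_{h\overline{g}\overline{q}}-2z_d\bigr)_H,
\]
and is bounded by $c\,\|u_{\overline{g}\overline{q}}-u_{h\overline{g}\overline{q}}\|_H$, hence by $c\,h^{r-1}$ thanks to (\ref{Est1}) of Lemma \ref{lem1}. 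Adding the two contributions gives (\ref{cota3}). An identical argument using $J_\alpha$, $J_{h\alpha}$ and (\ref{Est2}) produces (\ref{cota6}).

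The main obstacle is essentially bookkeeping: one must be careful that the constants in the $\alpha$-versions do not blow up when stating estimates independent of $\alpha$. Since the results here are only asserted for each fixed $\alpha>0$ with an $\alpha$-dependent constant $c_\alpha$, the coercivity constant $\lambda_\alpha$ and the norms $\|p_{\alpha\overline{g}_\alpha\overline{q}_\alpha}-z_d\|_H$ can appear in $c_\alpha$ without difficulty, so no uniformity in $\alpha$ need be tracked. The key technical input is simply the quadratic identity for $J$, $J_h$, $J_\alpha$, $J_{h\alpha}$ together with the previously established $O(h^{r-1})$ control and state error bounds.
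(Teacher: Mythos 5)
Your proposal is correct, and for the four two-sided bounds (\ref{cota1}), (\ref{cota2}), (\ref{cota4}), (\ref{cota5}) it coincides with the paper's argument: the paper writes down exactly the identities
$J(\overline{g}_h,\overline{q}_h)-J(\overline{g},\overline{q})=\tfrac{1}{2}\|u_{\overline{g}_h\overline{q}_h}-u_{\overline{g}\,\overline{q}}\|_H^2+\tfrac{M_1}{2}\|\overline{g}_h-\overline{g}\|_H^2+\tfrac{M_2}{2}\|\overline{q}_h-\overline{q}\|_Q^2$ and its three analogues --- which are your quadratic expansion with the linear term annihilated by the optimality condition --- and then feeds in the $O(h^{r-1})$ control and state error bounds. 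For the mixed bounds (\ref{cota3}) and (\ref{cota6}) your decomposition is genuinely different in detail: you insert $J_h(\overline{g},\overline{q})$ and compare $J$ with $J_h$ at the \emph{continuous} optimal control, bounding $J(\overline{g},\overline{q})-J_h(\overline{g},\overline{q})$ by $c\,\|u_{\overline{g}\,\overline{q}}-u_{h\overline{g}\,\overline{q}}\|_H\leq c\,h^{r-1}$, whereas the paper first uses $J(\overline{g},\overline{q})\leq J(\overline{g}_h,\overline{q}_h)$ and then compares $J$ with $J_h$ at the \emph{discrete} optimal control $(\overline{g}_h,\overline{q}_h)$, factoring the difference of squares in the same way. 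Both give the $h^{r-1}$ rate (your extra bracket is $O(h^{2(r-1)})$, which is dominated for $r>1$), but your variant has the mild advantage that the state discretization estimate (\ref{Est1}) is applied at the fixed control $(\overline{g},\overline{q})$, whose $H^r$ regularity is precisely what the lemma assumes, while the paper's version implicitly needs the analogous estimate at the $h$-dependent control $(\overline{g}_h,\overline{q}_h)$, i.e.\ a bound on $\|u_{\overline{g}_h\overline{q}_h}\|_r$ uniform in $h$. In either formulation the substance is identical: optimality gives the signs, the quadratic structure plus (\ref{cotaop1})--(\ref{cotaop4}) gives the $h^{2(r-1)}$ rates, and Lemma \ref{lem1} gives the $h^{r-1}$ mixed bounds.
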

\begin{proof}
Estimations (\ref{cota1}),(\ref{cota2}), (\ref{cota4}) and (\ref{cota5}) follow from the estimations (\ref{Est1}), (\ref{Est2}), (\ref{cotaop2}), (\ref{cotaop4}) and the equalities:
$$J(\overline{g}_h,\overline{q}_h)-J(\overline{g},\overline{q})=\frac{1}{2}\left\|u_{\overline{g}_h\overline{q}_h}-u_{\overline{g}\,\overline{q}}\right\|_H^2+\frac{M_1}{2}\left\|\overline{g}_h-\overline{g}\right\|_H^2+\frac{M_2}{2}\left\|\overline{q}_h-\overline{q}\right\|_Q^2$$
$$J_h(\overline{g},\overline{q})-J_h(\overline{g}_h,\overline{q}_h)=\frac{1}{2}\left\|u_{h\overline{g}\,\overline{q}}-u_{h\overline{g}_h\overline{q}_h}\right\|_H^2+\frac{M_1}{2}\left\|\overline{g}-\overline{g}_h\right\|_H^2+\frac{M_2}{2}\left\|\overline{q}-\overline{q}_h\right\|_Q^2$$
$$J_{\alpha}(\overline{g}_{h\alpha},\overline{q}_{h\alpha})-J_{\alpha}(\overline{g}_{\alpha},\overline{q}_{\alpha})=\frac{1}{2}\left\|u_{h\alpha\overline{g}_{h\alpha}\overline{q}_{h\alpha}}-u_{\alpha\overline{g}_{\alpha}\,\overline{q}_{\alpha}}\right\|_H^2+\frac{M_1}{2}\left\|\overline{g}_{h\alpha}-\overline{g}_{\alpha}\right\|_H^2+\frac{M_2}{2}\left\|\overline{q}_{h\alpha}-\overline{q}_{\alpha}\right\|_Q^2$$
$$J_{h\alpha}(\overline{g}_{\alpha},\overline{q}_{\alpha})-J_{h\alpha}(\overline{g}_{h\alpha},\overline{q}_{h\alpha})=\frac{1}{2}\left\|u_{h\alpha\overline{g}_{\alpha}\,\overline{q}_{\alpha}}-u_{h\alpha\overline{g}_{h\alpha}\overline{q}_{h\alpha}}\right\|_H^2+\frac{M_1}{2}\left\|\overline{g}_{\alpha}-\overline{g}_{h\alpha}\right\|_H^2+\frac{M_2}{2}\left\|\overline{q}_{\alpha}-\overline{q}_{h\alpha}\right\|_Q^2.$$
Estimations (\ref{cota3}) and (\ref{cota6}) follow from estimations (\ref{Est1}) and (\ref{Est2}), taking into account that 
\begin{equation*}
\begin{split}
J(\overline{g},\overline{q})-J_h(\overline{g}_h,\overline{q}_h)&\leq J(\overline{g}_h,\overline{q}_h)-J_h(\overline{g}_h,\overline{q}_h)\\&=\frac{1}{2}\left(\left\|u_{\overline{g}_h\overline{q}_h}-z_d\right\|_H^2-\left\|u_{h\overline{g}_h\overline{q}_h}-z_d\right\|_H^2\right)\\ &= \frac{1}{2}\left(u_{\overline{g}_h\overline{q}_h}-u_{h\overline{g}_h\overline{q}_h},u_{h\overline{g}_h\overline{q}_h}+u_{\overline{g}_h\overline{q}_h}-2z_d\right)_H\\ &\leq \frac{1}{2}\left\|u_{\overline{g}_h\overline{q}_h}- u_{h\overline{g}_h\overline{q}_h}\right\|_H \left(\left\|u_{h\overline{g}_h\overline{q}_h}-z_d\right\|_H+\left\|u_{\overline{g}_h\overline{q}_h}-z_d\right\|_H\right)\\&\leq ch^{r-1}
\end{split}
\end{equation*}
and
\begin{equation*}
\begin{split}
&J_{\alpha}(\overline{g}_{\alpha},\overline{q}_{\alpha})-J_{h\alpha}(\overline{g}_{h\alpha},\overline{q}_{h\alpha}) \leq\frac{1}{2}\left(\left\|u_{\alpha\overline{g}_{h\alpha}\overline{q}_{h\alpha}}-z_d\right\|_H^2-\left\|u_{h\alpha\overline{g}_{h\alpha}\overline{q}_{h\alpha}}-z_d\right\|_H^2\right)\\ &\leq \frac{1}{2}\left\|u_{\alpha\overline{g}_{h\alpha}\overline{q}_{h\alpha}}- u_{h\alpha\overline{g}_{h\alpha}\overline{q}_{h\alpha}}\right\|_H \left(\left\|u_{h\alpha\overline{g}_{h\alpha}\overline{q}_{h\alpha}}-z_d\right\|_H+\left\|u_{\alpha\overline{g}_{h\alpha}\overline{q}_{h\alpha}}-z_d\right\|_H\right) \leq c_{\alpha}h^{r-1}.
\end{split}
\end{equation*}
\end{proof}

\begin{Remark}
In a forthcoming paper, we will do the numerical analysis and its corresponding error estimates when we replace la condition (\ref{Palfa}ii) by the following 
$$
-\frac{\partial u}{\partial n}|_{\Gamma_{1}} \in \alpha\partial j(u).
$$	
Here $j(x,.)$ is locally Lipschitz for a.e. $x\in \Gamma_{1}$ and not necessary differentiable following \cite{BGT, GMOT, GaTa3, HS}. Therefore, the variational formulation, for the system state, will be given by an elliptic hemivariational inequality, and the corresponding control variable can be the energy $g$, or the heat flux $q$ or the vectorial control $(g,q)$.
\end{Remark}

\section{Conclusions}

For two vectorial continuous optimal control problems $(P_{\alpha})$ and $(P)$, and for the corresponding two vectorial discrete optimal control problems $(P_{h\alpha})$ and $(P_{h})$ we have obtained a commutative diagram when $h\rightarrow 0^{+}$ and $\alpha\rightarrow +\infty$, with $\alpha\rightarrow +\infty$ and $h\rightarrow 0^{+}$, and the corresponding double convergence when $(h,\alpha)\rightarrow (0^{+},+\infty)$ simultaneously for the optimal controls, for the optimal system states and for the optimal adjoint states. The parameter $\alpha$ can be considered as the heat transfer coefficient on a portion of the boundary of a material, and $h$ is the parameter of the finite element approximation.

\section*{Acknowledgements}

The present work has been partially sponsored by the European Union’s Horizon 2020 Research and Innovation Programme under the Marie Sklodowska-Curie grant agreement 823731 CONMECH and by the Project PIP No. 0275 from CONICET and Universidad Austral, Rosario, Argentina for the third author, and by the Project PPI No. 18/C555 from SECyT-UNRC, Río Cuarto, Argentina for the first and second authors.



\begin{thebibliography}{1}
	
\bibitem{AK} {\sc{Azzam A. - Kreyszig E.}}, \emph{ On solutions of elliptic equations satisfying mixed boundary conditions}, SIAM J. Math. Anal. 13 (1982), 254-262.

\bibitem{BBP} {\sc{Bacuta C. - Bramble J.H. - Pasciak J.E.}}, \emph{Using finite element tools in proving shift theorems for elliptic boundary value problems}, Numer. Linear Algebra Appl., 10 (2003), 33-64.

\bibitem{BGT} {\sc{Bollo C.M. - Gariboldi C.M. - Tarzia D.A.}}, \emph{Simultaneous distributed and Neumann boundary optimal control problems for elliptic hemivariational inequalities}, Journal of Nonlinear and Variational Analysis, 6 No.5 (2022), 535-549.

\bibitem{BS} {\sc{Brenner S. - Scott L.R.}}, \emph{The mathematical theory of finite element methods}, Springer, New York, (2008).

\bibitem{CLM} {\sc{Carl S. - Le V.K. - Motreanu D.}}, \emph{Nonsmooth Variational Problems and Their Inequalities},
Springer, New York (2007).

\bibitem{CM1} {\sc{Casas E. - Mateos M.}},\emph{Uniform convergence of the FEM. Applications to state constrained control problems},
Comput. Appl. Math., 21 (2002), 67-100.

\bibitem{CM2} {\sc{Casas E. - Mateos M.}},\emph{Dirichlet contol problems in smooth and nonsmooth convex plain domains},
Control Cybernetics, 40 (2011), 931-955.

\bibitem{CR} {\sc{Casas E. -  Raymond J.P.}},\emph{Error estimates for the numerical approximation of Dirichlet boundary control for
semilinear elliptic equations}, SIAM J. Control Optim., 45 (2006), 1586-1611.

\bibitem{Ci} {\sc{Ciarlet P.G.}}, \emph{The finite element method for elliptic problems}, SIAM, Philadelphia, (2002).

\bibitem{DGH} {\sc{Deckelnick K. - Günther A. - Hinze  M.}},\emph{Finite element approximation of elliptic control problems with constraints
on the gradient}, Numer. Math., 111 (2009), 335-350.

\bibitem{DH} {\sc{Deckelnick K. - Hinze  M.}},\emph{Convergence of a finite element approximation to a state-constrained ellliptic control
problem}, SIAM J. Numer. Anal., 45 (2007), 1937-1953.

\bibitem{GT} {\sc{Garguichevich G.G. - Tarzia D.A.}}, \emph{The steady-state two-fase Stefan problem with an internal energy and some related problems}, Atti Sem. Mat. Fis. Univ. Modena, 39 (1991), 615-634.

\bibitem{GMOT} {\sc{Gariboldi C.M. - Migórski S. - Ochal A. - Tarzia D.A.}}, \emph{Existence, comparison, and convergence results for a class of elliptic hemivariational inequalities}, Applied Mathematics and Optimization, 84 (Suppl 2) (2021), S1453-S1475.

\bibitem{GaTa}  {\sc{Gariboldi C.M. - Tarzia D.A.}}, \emph{Convergence of distributed optimal controls on the internal energy in mixed elliptic problems when the heat transfer coefficient goes to infinity}, Appl. Math. Optim., 47 (2003), 213-230.

\bibitem{GaTa1} {\sc{Gariboldi C.M. - Tarzia D.A.}}, \emph{Convergence of boundary optimal control problems with restrictions in mixed elliptic Stefan-like problems},
Adv. in Diff. Eq. and Control Processes, 1(2) (2008), 113-132.

\bibitem{GaTa2}  {\sc{Gariboldi C.M. - Tarzia D.A.}}, \emph{Existence, uniqueness and convergence of simultaneous
distributed-boundary optimal control problems}, Control and Cybernetics, 44 (2015), 5-17.

\bibitem{GaTa3}  {\sc{Gariboldi C.M. - Tarzia D.A.}}, \emph{Distributed optimal control problems for a class of elliptic hemivariational inequalities with a parameter and its asymptotic behavior}, Communications in Nonlinear Science and Numerical Simulation, 104 No.106027 (2022), 1-9.

\bibitem{G} {\sc{Grisvard P.}}, \emph{Elliptic Problems in Nonsmooth Domains}, Pitman, London, (1985).

\bibitem{HMRS}  {\sc{Haller-Dintelmann R. - Meyer C. - Rehberg J. - Schiela A.}}, \emph{Hölder continuity and optimal control for nonsmooth
elliptic problems}, Appl. Math. Optim., 60 (2009), 397-428.

\bibitem{HS} {\sc{Han W. - Sofonea M.}}, \emph{Numerical analysis of hemivariational inequalities in contact mechanics}, Acta Numerica, 28 (2019), 175-286.

\bibitem{HH}  {\sc{Hintermüller M. - Hinze M.}}, \emph{Moreau-Yosida regularization in state constrained ellliptic control problems: Error
estimates and parameter adjustement}, SIAM J. Numer. Anal., 47 (2009), 1666-1683.

\bibitem{H1}  {\sc{ Hinze M.}}, \emph{A variational discretization concept in control constrained optimization: The linear-quadratic case}, Comput. Optim. Appl., 30 (2005), 45-61.

\bibitem{HM}  {\sc{ Hinze M. - Matthes U.}}, \emph{A note on variational dicretization of elliptic Nuemann boundary control}, Control
Cybernetics, 38 (2009), 577-591.

\bibitem{KS}  {\sc{Kinderlehrer D. - Stampacchia  G.}}, \emph{An introduction to variational inequalities and their applications}, SIAM,
Philadelphia, (2000).

\bibitem{LCB} {\sc{Lanzani L. - Capagna L. -Brown R.M.}}, \emph{The mixed problem in $L^{p}$ for some two-dimensional Lipschitz domain}, Math. Ann., 342 (2008), 91-124.

\bibitem{Li} {\sc{Lions J.L.}}, \emph{Contr\^{o}le optimal de syst\`{e}mes gouvern\'es par des \'equations
aux dérivées partielles}, Dunod, Paris, 1968.

\bibitem{MH} {\sc{Mermri E.B. - Han W.}}, \emph{Numerical approximation of a unilateral obstacle proble}, J. Optim. Th. Appl., 153
(2012), 177-194.

\bibitem{R} {\sc{Rodrigues J.F.}}, \emph{Obstacle Problems in Mathematical Physics}, North-Holland, Amsterdam (1987).

\bibitem{S} {\sc{Shamir E.}}, \emph{Regularization of mixed second order elliptic problems}, Israel J. Math., 6 (1968), 150-168.

\bibitem{TT} {\sc{Tabacman E.D. - Tarzia D.A.}}, \emph{Sufficient and/or necessary condition for the heat transfer coefficient on $\Gamma_1$ and the heat flux on $\Gamma_2$ to obtain a steady-state two-phase Stefan problem}, J. Diff. Eq., 77 (1989), 16-37.

\bibitem{Ta} {\sc{Tarzia D.A.}}, \emph{Sur le probl\`eme de Stefan \`a deux phases}, C. R. Acad. Sci. Paris Ser. A, 288 (1979), 941-944.

\bibitem{Ta1} {\sc{Tarzia D.A.}}, \emph{Numerical analysis of a mixed elliptic problem with flux and convective boundary conditions to obtain a discrete solution of non-constant sign}, Numer. Meth. PDE, 15 (1999), 355-369.

\bibitem{Ta3} {\sc{Tarzia D.A.}}, \emph{An inequality for the constant heat flux to obtain a steady-state two-phase Stefan problem}, Eng. Anal., 5 (4) (1988), 177-181.

\bibitem{Ta4} {\sc{Tarzia D.A.}}, \emph{A commutative diagram among discrete and continuous boundary optimal control problems}, Adv. Diff. Eq. Control Processes, 14 (2014), 23-54.

\bibitem{Ta5} {\sc{Tarzia D.A.}}, \emph{Double convergence of a family of discrete distributed mixed elliptic optimal control problems with a parameter}, in Proceedings of the 27th IFIP TC 7 Conference on System Modeling and Optimization, CSMO 2015, IFIP AICT 494, L. Bociu and J.-A. Desideri and A. Habbal (Eds.), Springer, Berlin (2016), 493-504.

\bibitem{Tr} {\sc{Tr\"{o}lstzsch F.}}, \emph{Optimal control of partial differential equations. Theory, methods and applications},  American Math. Soc., Providence, 2010.

\bibitem{YCY} {\sc{Yan M. - Chang L. - Yan N.}}, \emph{Finite element method for constrained optimal control problems governed by nonlinear elliptic PDEs}, Math. Control Related Fields, 2 (2012), 183-194.

\bibitem{YCL} {\sc{Ye Y. - Chan C.K. - Lee H.W.J.}}, \emph{The existence results for obstacle optimal control problems}, Appl. Math. Comput., 214 (2009), 451-456.

\end{thebibliography}
\end{document}